\documentclass[11pt, epsfig]{article}
\usepackage{epsfig, amsmath, amssymb, amsthm, times}
\usepackage{epsfig, amsmath, amssymb, amsthm, times, authblk}
\parindent=1.5em
\parskip10pt
\textwidth=6.5in
\topmargin= 0.0in
\oddsidemargin=0in
\textheight=8.4in
\usepackage[usenames,dvipsnames]{xcolor}

\newcommand{\bel}{\begin{equation}\label}
\newcommand{\ee}{\end{equation}}
      \newtheorem{theorem}{Theorem}[section]

       \newtheorem{lemma}[theorem]{Lemma}
       
\theoremstyle{definition}

\def\eps{\varepsilon}

\def\N{{\mathbb N}}

\def\<{\langle}
\def\>{\rangle}
\def\P{\mathbb P}

\def\E{\mathbb E}

\def\eps{\epsilon}

\def\0{\underline 0}

\def\1{\underline 1}

\def\var{\mathbb{V}\mathrm{ar}}
\def\cov{\mathbb{C}\mathrm{ov}}

\def\Bin{{\operatorname{Bin}}}
\def\var{{{\mathbb V}\mbox{ar}}}
\def\cov{{{\mathbb C}\mbox{ov}}}
\def\cF{\mathcal F}
\def\be*{\begin{equation*}}
\def\ee*{\end{equation*}}
\def\bar*{\begin{eqnarray*}}
\def\ear*{\end{eqnarray*}}
\def\be{\begin{equation}}
\def\ee{\end{equation}}

\def\pxn{p^{}_{X_n}}
\def\qxn{q^{}_{X_n}}
\def\pyn{p^{}_{Y_n}}
\title{Asymptotics of the overflow in urn models\footnote{This material is based upon work supported by and while serving at the National Science Foundation. Any opinion, findings, and conclusions or recommendations expressed in this material are
those of the authors and do not necessarily reflect the views of the National Science Foundation.}\ \footnote{Part of the research by the last two authors was carried out while they
visited the Center for Mathematical Modeling at the University of Chile.
They would like to thank the first author for arranging the visits and his
hospitality, and the CMM for a generous support.}}
\author[1]{Raul Gouet\thanks{Supported by grants PIA AFB-170001 and Fondecyt 1161319}}
\author[2]{Pawe{\l} Hitczenko\thanks {On leave from Drexel University}}
\author[3]{Jacek Weso{\l}owski\thanks{Supported by the grant 2016/21/B/ST1/00005 of National Science Centre, Poland}}
\affil[1]{Departamento de Ingenieria Matem\'atica and CMM  (UMI 2807, CNRS), Universidad de Chile}
\affil[2]{Division of Mathematical Sciences, National Science Foundation}
\affil[3]{Faculty of Mathematics and Information Science, Warsaw University of Technology}
\begin{document}
\maketitle
\begin{abstract}
	Consider a number, finite or not, of urns each with fixed capacity $r$ and balls randomly distributed among them. An overflow is the number of balls that are assigned to urns that already contain $r$ balls.  When $r=1$, using analytic methods,  Hwang and Janson gave conditions under which the  overflow (which in this case is just the number of balls landing in non--empty urns)  has an asymptotically Poisson distribution as the number of balls grows to infinity.  Our aim here is to  systematically study the asymptotics of the overflow in general situation, i.~e. for arbitrary $r$. In particular,    we
	provide sufficient conditions for both Poissonian and normal asymptotics for general $r$, thus extending Hwang--Janson's work. Our approach  relies  on purely probabilistic methods.\\
	
	{\it \noindent Keywords and phrases}: Urn model; occupancy problem; random allocations; weak limit theorems\\
		{\it \noindent MSC 2010 subject classifications}: Primary 60F05, 60K30; secondary 60K35
\end{abstract}
\section{Introduction}
Urn models are one of the fundamental objects in classical probability theory and they have been studied for a long time in various degrees of generality. We refer the reader to classical sources \cite{JK,KSC,KB,M} for a complete account of the theory and discussions of different models, and to e.~g. \cite{GHP, HJ, BHLRW} for some of the more recent developments.   Perhaps the most heavily
studied characteristic is the number of occupied urns after $n$ balls have been thrown in. One reason for this is that it is often interpreted as a measure of diversity of a given population. Actually,  more refined characteristics, e.~g. the number of urns containing the prescribed number of balls, have been subsequently studied for various urn models. In diversity analysis, the number $M_k$ of urns with exactly $k$ balls, is called abundance count of order $k$. In particular,  the popular estimator of species richness, called Chao estimator, is  based on $M_1$ and $M_2$ (with a more sophisticated version using also $M_3$ and $M_4$) - see e.~g. \cite{CC}.  In \cite{HJ} the authors
used analytical methods based on Poissonization and de--Poissonization to prove that the number of empty urns is asymptotically normal as long as its variance grows to infinity (this is clearly the minimal requirement). As a by--product of their method they
established the Poissonian asymptotics of the number of balls that fall into non--empty urns when the variance is finite and under additional assumptions on the distribution among boxes. We mention in passing that the number of balls falling into non--empty urns is sometimes called the number of collisions.
Under the uniformity assumption for the distribution of balls it has been used, for example,  for testing random number generators (see \cite[\S 3.3.2~I]{K2} for more details). We refer also to \cite{AGK} and references therein for another illustration of how this concept is used, e.g. in cryptology.

Our main aim here is to extend  the result of Hwang and Janson by considering the number of balls falling into urns containing at least $r$ balls (thus, their result corresponds to $r=1$). Relying on purely probabilistic methods we  provide sufficient conditions for both Poissonian and normal asymptotics for the number of balls falling into such urns.

One way to formulate the problem is as follows. There is a collection (possibly infinite) of  distinct containers in which balls are to be inserted. All containers have the same finite capacity. Each arriving ball is to be placed in one of the containers, randomly and independently of other balls. However, if the container selected for a given ball is already full, the ball lands in the overflow basket. We are interested in the number of balls in that basket when more and more balls appear. The notion of the overflow is not entirely new and has appeared, for example,  in the context of collision resolution  for hashing algorithms, see a
discussion in   section: ``External searching''  in \cite[\S 6.4]{K3}. We also refer to subsequent work \cite{R, Mo} for  the computation of the probability that there is no overflow (under the uniformity assumption), and to \cite{DNW} which, in part, concerns the estimation of the probability of unusually large overflow. As far as we are aware, however, asymptotic behavior of the overflow has  not been systematically investigated.

More precisely, we consider the following model: For any $n\ge 1$,  let $X_{n,1},\ldots,X_{n,n}$ be  iid rv's with values in $ M_n\subset \N:=\{1,2,\ldots\}$ and let $p_{n,m}=\P(X_{n,1}=m), m\in M_n$, be the common distribution among the boxes for each of the $n$ balls in the $n$th experiment. Let also
\begin{equation*}
N_{n,k}(m)=\sum_{j=1}^{k-1}\,I_{\{X_{n,j}=m\}},
\end{equation*}
for any $n\in \N$, $k\in\{1,\ldots,n,n+1\}$ and $m\in M_n$, where $I_{\{\cdot\}},$ denotes the indicator of the events within brackets. That is $N_{n,k}(m)$ is the number of balls among first $k-1$ balls for which the
$m$th box was selected.

Let $r$ be a given positive integer, which denotes the (same) capacity of every container. Then
\begin{equation}
\label{eq:Ynk}
Y_{n,k}=\sum_{m\in M_n}\,I_{\{X_{n,k}=m\}}\,I_{\{N_{n,k}(m)\ge r \}}
\end{equation}
is 1 if the $k$th ball lands in the overflow, and is 0 otherwise. Naturally, $Y_{n,k}=0$ for $k=1,\ldots,r$. Consequently, the size of the overflow, denoted $V_{n,r}$, can be written as
\begin{equation*}
V_{n,r}=\sum_{k=1}^n\,Y_{n,k}.
\end{equation*}
We are interested in the asymptotic distribution of $V_{n,r}$, as $n\to\infty$. We will show that there are regimes relating $(p_{n,m})_{m\in M_n}$ and $n\to\infty$ under which the limiting distribution of $V_{n,r}$ (possibly standardized) is either Poisson or normal.
These regimes will be defined through the limiting behavior of $$p_n^*:=\sup_{m\in M_n}\,p_{n,m}\qquad\mbox{and}\qquad\sum_{m\in M_n}\,p_{n,m}^{r+1}.$$
Actually,  we impose assumptions on $\lim\limits_{n\to\infty}\,np_n^*$ and  $\lim\limits_{n\to\infty}\,n^{r+1}\,\sum_{m\in M_n}\,p_{n,m}^{r+1}$.
\subsection{Multinomial distribution and negative association}
\label{neg_dep_rem}
Note that, for distinct $m_1,\ldots,m_s\in M_n$ and any $k=1,\ldots,n$, $(N_{n,k}(m_1),\ldots,N_{n,k}(m_s))$ has  multinomial distribution  $\mathrm{Mn}_s(k-1; p_{n,m_1},\ldots,p_{n,m_s})$. In particular, $N_{n,k}(m)$  has the binomial distribution
$\Bin(k-1, p_{n,m})$, that is,
\begin{equation}
\label{eq:binom}
\P(N_{n,k}(m)=i)={k-1\choose i}p_{n,m}^iq_{n,m}^{k-1-i},\;i=0,\ldots,k-1,
\end{equation}
where $q_{n,m}=1-p_{n,m}$. Also, let
\begin{equation*}
N_{n,k}^l(m)=N_{n,l}(m)-N_{n,k}(m)=\sum_{j=k}^{l-1}\,I_{\{X_{n,j}=m\}},
\end{equation*}
for $k<l$, and $N_{n,k}^l(m)=0$, for $k\ge l$.
Then, for distinct $j_1,\ldots,j_t\in M_n$ and $k<l$,  $(N_{n,k}^l(j_1),\ldots,N_{n,k
}^l(j_t))$ has multinomial distribution  $\mathrm{Mn}_t(l-k; p_{n,j_1},\ldots,p_{n,j_t})$. Moreover,   vectors $(N_{n,k}(m_1),\ldots,N_{n,k}(m_s))$ and $(N_{n,k}^l(j_1),\ldots,N_{n,k}^l(j_t))$ are independent.
Further, it is well known that multinomial random variables are negatively orthant dependent (NOD), that is, for $m_1\neq m_2$
\begin{equation}
\label{eq:NOD1}
\small\P(N_{n,k}(m_1)\ge x_1,\,N_{n,k}(m_2)\ge x_2)\le \P(N_{n,k}(m_1)\ge x_1)\,\P(N_{n,l}(m_2)\ge x_2).
\end{equation}

As such they are also negatively associated (NA) - see \cite{JP} for the definition and basic properties $P_1,\ldots, P_7$.

In particular, both sets $N_{n,k}(m_1),\ldots, N_{n,k}(m_t)$ and $N_{n,k}^l(j_1),\ldots,N_{n,k}^l(j_t)$ are NA and, by property $P_7$, the combined set of $N_{n,k}$ and $N_{n,k}^l$ variables is also NA. In particular, by $P_4$, for distinct $m_1,m_2,n_1, n_2$,  the subset $N_{n,k}(m_1)$, $N_{n,k}(n_1)$, $N_{n,k}(m_2)$, $N_{n,k}(n_2)$, $N_{n,k}^l(m_2)$ $N_{n,k}^l(n_2)$ is NA as well. Finally, noting that $N_{n,l}(m)=N_{n,k}(m)+N_{n,k}^l(m)$ we conclude by $P_6$ that $N_{n,k}(m_1)$, $N_{n,k}(n_1)$, $N_{n,l}(m_2)$, $N_{n,l}(n_2)$ are NA.

Consequently, the following extended versions of the NOD property \eqref{eq:NOD1} hold:
\begin{equation}
\label{eq:PA2}
\small\begin{split}
\P&(N_{n,k}(m_1)\ge x_1, N_{n,k}(n_1)\ge y_1,N_{n,l}(m_2)\ge x_2,N_{n,l}(n_2)\ge y_2)\\
&\le
\P(N_{n,k}(m_1)\ge x_1)\P(N_{n,k}(n_1)\ge y_1)\P(N_{n,l}(m_2)\ge x_2)\P(N_{n,l}(n_2)\ge y_2)
\end{split}
\end{equation}
and, taking $y_1=y_2=0$  in \eqref{eq:PA2},
\begin{equation}
\label{eq:covH}
\small\P(N_{n,k}(m_1)\ge x_1,\,N_{n,l}(m_2)\ge x_2)\le \P(N_{n,k}(m_1)\ge x_1)\,\P(N_{n,l}(m_2)\ge x_2).
\end{equation}
\subsection{Auxiliary random variables}
We find it convenient to introduce sequences of random variables $(X_n)$ and $(Y_n)$ such that, for any $n\in\N$, the random variables $X_n, Y_n,X_{n,1},\ldots,X_{n,n}$ are iid. This allows, in general, to simplify expressions
because sums over $m\in M_n$ can be represented as expectations
and computations are compactly carried out by means of conditional expectations. For example,
\begin{equation*}
\sum_{m\in M_n}\,p_{n,m}^{r+1}=\E\,p^{r}_{X_n},
\end{equation*}
where here and everywhere below we write $\pxn$ for $p_{n,X_n}$.

Let $\cF_{n,k} = \sigma(X_{n,1},\ldots,X_{n,k})$ be the $\sigma$-algebra generated by $X_{n,1},\ldots,X_{n,k}$, for $k=1,\ldots,n$, and note that
$N_{n,j}(m)$ is $\cF_{n,k}$-measurable, for any $m\in M_n, k\ge j-1$. Note also that, for any $n,k$, $X_n$ is independent of $\cF_{n,k}$. Then $Y_{n,j}$  can be written as
\begin{equation}
\label{eq:form0}
Y_{n,j}=\E\Big(\tfrac{I_{\{X_{n,j}=X_n\}}}{\pxn}I_{\{N_{n,j}(X_n)\ge r\}}\Big|\cF_{n,n}\Big).
\end{equation}
So, for $j\ge k$,
\begin{equation}
\label{eq:form}
\begin{split}
\E\,(Y_{n,j}|\cF_{n,k-1})&=\E\Big(\tfrac{I_{\{X_{n,j}=X_n\}}}{\pxn}I_{\{N_{n,j}(X_n)\ge r\}}\Big|\cF_{n,k-1}\Big)\\
&=\E\Big(\E\Big(\tfrac{I_{\{X_{n,j}=X_n\}}}{\pxn}I_{\{N_{n,j}(X_n)\ge r\}}\Big|X_n,\cF_{n,j-1}\Big)\Big|\cF_{n,k-1}\Big)\\
&=\E\Big(\E\Big(\tfrac{I_{\{X_{n,j}=X_n\}}}{\pxn}\Big|X_n,\cF_{n,j-1}\Big)\,I_{\{N_{n,j}(X_n)\ge r\}}\Big|\cF_{n,k-1}\Big)\\
&=\E(I_{\{N_{n,j}(X_n)\ge r\}}|\cF_{n,k-1}).
\end{split}
\end{equation}
Hence, $\E\,(Y_{n,j}|\cF_{n,k})=\E(I_{\{N_{n,j}(X_n)\ge r\}}|\cF_{n,k})$, for $j>k$, and  $\E\,(Y_{n,k}|\cF_{n,k})=Y_{n,k}$.

Note that representation \eqref{eq:form} implies
\begin{equation}
\label{eq:form1}
\E(Y_{n,k}|\cF_{n,k-1})=\P(N_{n,k}(X_n)\ge r|\cF_{n,k-1})=\P(N_{n,k}(X_n)\ge r|\cF_{n,n}).
\end{equation}
Taking expectations of both extremes of \eqref{eq:form} we get
\begin{equation}
\label{eq:expe}
\E\,Y_{n,j}=\E\,\P(N_{n,j}(X_n)\ge r|\cF_{n,k-1})=\E\sum_{i=r}^{j-1}\,\binom{j-1}{i}\,p_{X_n}^iq_{X_n}^{j-1-i},
\end{equation}
where $\qxn=1-\pxn$. Furthermore, for $k,l=1\ldots,n$, \eqref{eq:form1} yields
{\small\begin{equation*}
	\E(\,\E\,(Y_{n,k}|\cF_{n,k-1})\,\E\,(Y_{n,l}|\cF_{n,l-1}))=\E\,(\P(N_{n,k}(X_n)\ge r|\cF_{n,n})\P(N_{n,l}(Y_n)\ge r|\cF_{n,n}))
	\end{equation*}
	and, because $N_{n,k}(X_n)$ and $N_{n,l}(Y_n)$ are conditionally independent given $\cF_{n,n}$, it follows that
	\begin{equation*}
	\E(\,\E\,(Y_{n,k}|\cF_{n,k-1})\,\E\,(Y_{n,l}|\cF_{n,l-1}))
	=\P(N_{n,k}(X_n)\ge r,N_{n,l}(Y_n)\ge r).
	\end{equation*}}
Consequently, for any $k,l$,
\begin{equation}
\label{eq:war}
\small\cov\left(\E\,(Y_{n,k}|\cF_{n,k-1}),\E\,(Y_{n,l}|\cF_{n,l-1})\right)=\cov(I_{\{N_{n,k}(X_n)\ge r\}},I_{\{N_{n,l}(Y_n)\ge r\}}).
\end{equation}

\section{Poissonian asymptotics}
Let $\mathrm{Pois}(\mu)$ denote the Poisson distribution with parameter $\mu$.
\begin{theorem}\label{thm:rrr}
	Let $\mu>0$. If
	\bel{eq:inten1}
	n^{r+1}\E\,p_{X_n}^r\to\,(r+1)!\mu
	\ee
	and
	\bel{eq:jedno}
	n\,p_n^*\to\,0,
	\ee
	then $V_{n,r}\stackrel{d}{\to}\,\mathrm{Pois}(\mu)$.
\end{theorem}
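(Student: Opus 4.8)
The plan is to recognize $V_{n,r}=\sum_{k=1}^n Y_{n,k}$ as a row sum of a triangular array of $\{0,1\}$-valued variables adapted to the filtration $(\cF_{n,k})$, and to obtain the Poisson limit from the standard criterion for such arrays. Writing $\xi_{n,k}:=\E(Y_{n,k}|\cF_{n,k-1})$ for the predictable compensator of the $k$th term, which by \eqref{eq:form1} equals $\P(N_{n,k}(X_n)\ge r|\cF_{n,k-1})$, it suffices to prove
\bel{eq:plan-cond}
A_n:=\sum_{k=1}^n\xi_{n,k}\stackrel{P}{\to}\mu\qquad\text{and}\qquad \sum_{k=1}^n\xi_{n,k}^2\stackrel{P}{\to}0 .
\ee
This criterion can itself be justified probabilistically by comparing $\E\,e^{itV_{n,r}}$ with $\E\prod_{k=1}^n\bigl(1+(e^{it}-1)\xi_{n,k}\bigr)$ via the mean-one martingale $\prod_{j\le k}e^{itY_{n,j}}/\E(e^{itY_{n,j}}|\cF_{n,j-1})$: since $\E(e^{itY_{n,k}}|\cF_{n,k-1})=1+(e^{it}-1)\xi_{n,k}$, the two conditions in \eqref{eq:plan-cond} force $\log\prod_k(1+(e^{it}-1)\xi_{n,k})=(e^{it}-1)A_n+O(\sum_k\xi_{n,k}^2)$ towards the Poisson characteristic exponent $(e^{it}-1)\mu$ (convergence to a \emph{constant} being exactly why the first limit must hold in probability). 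An equivalent route is convergence of factorial moments. So the real work is to verify \eqref{eq:plan-cond}.

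For the first limit I would start from $\E A_n=\E V_{n,r}=\sum_{j=1}^n\E Y_{n,j}$ using \eqref{eq:expe}. The leading contribution is the $i=r$ term of the binomial tail, and the hockey-stick identity $\sum_{j=1}^n\binom{j-1}{r}=\binom{n}{r+1}\sim n^{r+1}/(r+1)!$ turns \eqref{eq:inten1} into $\E A_n\to\mu$. The higher terms $i>r$, as well as the factors $q_{X_n}^{\,j-1-i}$, are negligible because consecutive tail terms have ratio at most $jp_{X_n}\le np_n^*\to0$ by \eqref{eq:jedno}, so the tail is dominated by a geometric series with vanishing ratio. It then remains to show $\var(A_n)\to0$.

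The variance bound is the main obstacle. Here I would invoke the covariance identity \eqref{eq:war}, which gives
\bel{eq:plan-var}
\var(A_n)=\sum_{k,l=1}^n\cov\bigl(I_{\{N_{n,k}(X_n)\ge r\}},I_{\{N_{n,l}(Y_n)\ge r\}}\bigr),
\ee
with $X_n,Y_n$ independent copies, independent of the balls. Conditioning on $(X_n,Y_n)$ and splitting according to whether $X_n=Y_n$, on $\{X_n\ne Y_n\}$ the negative-association bound \eqref{eq:covH} shows each summand is dominated by the product of its marginals, so these pieces are non-positive after the subtraction defining the covariance. The only positive contribution comes from $\{X_n=Y_n=m\}$; there, for $k\le l$ one has $\{N_{n,k}(m)\ge r\}\subset\{N_{n,l}(m)\ge r\}$, so it is at most $\sum_m p_{n,m}^2\,\P(N_{n,k}(m)\ge r)$, which after summation over $k\le l$ is of order $n^{r+2}\sum_m p_{n,m}^{r+2}\le n^{r+2}p_n^*\,\E\,p_{X_n}^r$, hence at most a constant times $np_n^*\,\mu\to0$, again by \eqref{eq:jedno} together with \eqref{eq:inten1}. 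Thus $\var(A_n)\to0$ and, with the mean computation, $A_n\stackrel{P}{\to}\mu$.

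Finally, for the negligibility condition I would bound $\E\sum_k\xi_{n,k}^2=\sum_k\P(N_{n,k}(X_n)\ge r,\,N_{n,k}(Y_n)\ge r)$ by the same two mechanisms: the $\{X_n\ne Y_n\}$ part is at most $\sum_k(\E Y_{n,k})^2=O(1/n)$, while the $\{X_n=Y_n\}$ part is $O(p_n^*)$, so $\sum_k\xi_{n,k}^2\stackrel{P}{\to}0$ and in particular $\max_k\xi_{n,k}\stackrel{P}{\to}0$. With both conditions in \eqref{eq:plan-cond} verified, the criterion yields $V_{n,r}\stackrel{d}{\to}\mathrm{Pois}(\mu)$. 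The delicate point throughout is the separation of the positively correlated ``same-urn'' contributions from the genuinely negatively associated ``distinct-urn'' terms, and the observation that $np_n^*\to0$ is precisely the strength needed to annihilate the former.
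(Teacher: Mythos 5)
Your proposal is correct and follows essentially the same route as the paper: you verify Poisson convergence of $\sum_k Y_{n,k}$ through the compensators $\E(Y_{n,k}|\cF_{n,k-1})$, compute the mean of their sum from \eqref{eq:expe} together with the binomial tail bound, and kill the variance via the identity \eqref{eq:war} combined with the negative orthant dependence bound \eqref{eq:covH} and the split on $\{X_n=Y_n\}$ versus $\{X_n\neq Y_n\}$, exactly as in Steps II and III of the paper. The only difference is cosmetic: you replace the cited martingale Poisson limit theorem (with its $\max_k$ and Lindeberg conditions) by the equivalent criterion $\sum_k\E(Y_{n,k}|\cF_{n,k-1})^2\stackrel{P}{\to}0$, which you verify by the same two mechanisms.
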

{\noindent\bf Examples:}\\
$\bullet\ $ Consider the uniform case, that is, $p_{n,j}=1/m_n$, for $j\in M_n=\{1,\ldots,m_n\}$. Then by the above theorem we get
$$\tfrac{n^{r+1}}{m_n^r}\to (r+1)!\mu \qquad \Rightarrow  \qquad V_{n,r}\stackrel{d}{\to}\,\mathrm{Pois}(\mu).$$
Illustrative simulations are visualized in Figure \ref{fig:one}.

\begin{figure}
	\centering
	\includegraphics[height=60mm,width=100mm]{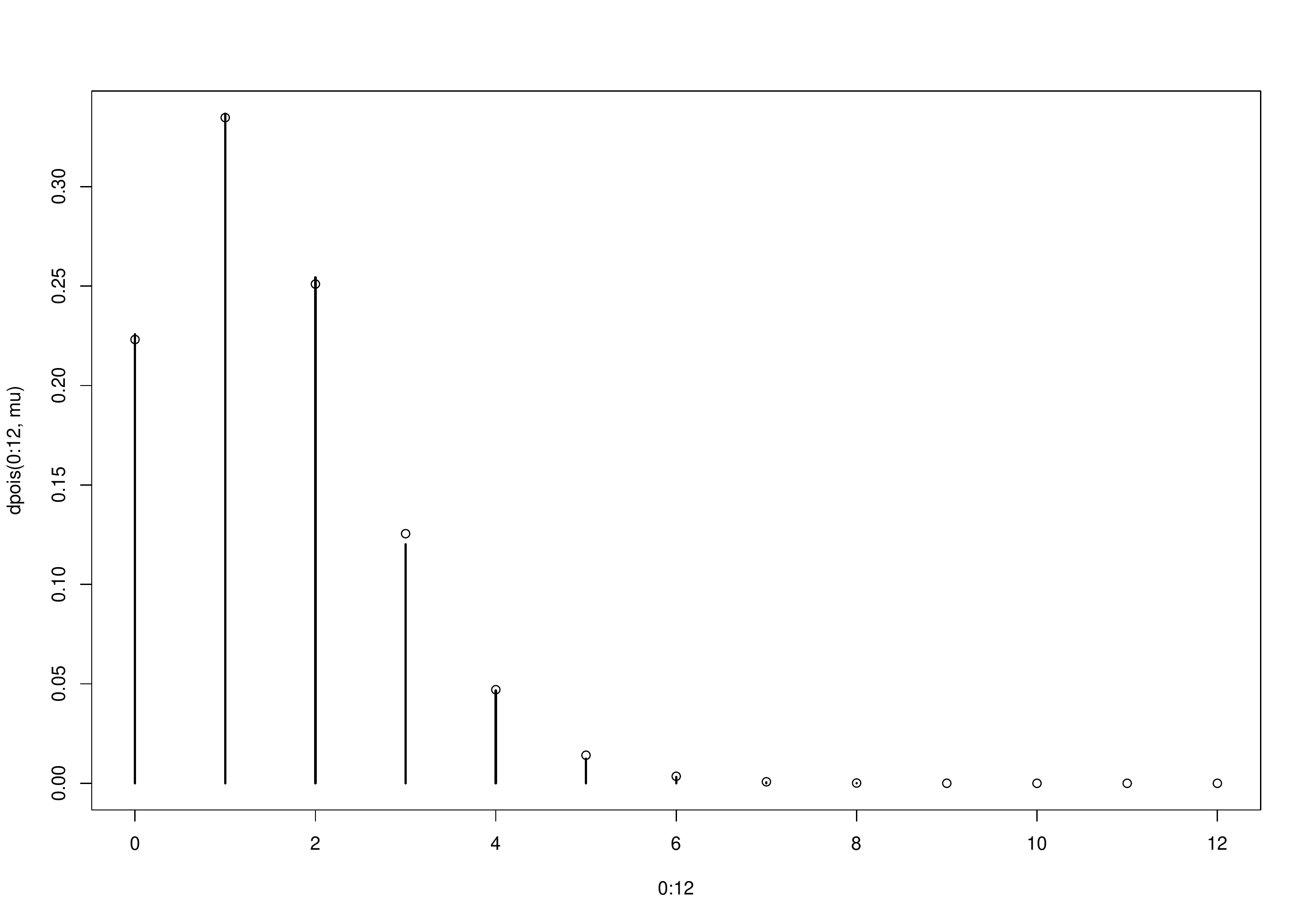}
	\caption{\label{fig:one}Simulations of the overflow in the uniform case with $r=2$, $n=10^6$,  $m_n=a n^{(r+1)/r}$ with $a=1/3$ (i.e. $m_{10^6}=10540926$, and $\mu=1.5$) are shown as vertical lines ($10^4$ repetitions) while Poisson probabilities  for $k=0,\ldots,12$, $\mathrm{dpois}(0:12,\mu)$, are depicted by circles.}
\end{figure}
$\bullet\ $Consider the geometric case, $p_{n,j}=p_n(1-p_n)^j$, $j\ge 0$.
Then
\bel{geo}
n^{r+1}\E\,p_{X_n}^r=\tfrac{(np_n)^{r+1}}{1-(1-p_n)^{r+1}}.
\ee
Take $p_n=n^{-\tfrac{r+1}{r}}$ (that is $n^{r+1}p_n^r=1$). Thus, by \eqref{geo},
$
n^{r+1}\E\,p_{X_n}^r=\tfrac{p_n}{(r+1)p_n+o(p_n)}\to \tfrac{1}{r+1}.
$
Moreover, $np_n^*=np_n=n^{-\tfrac{1}{r}}\to 0$.

Consequently, the above theorem  yields $V_{n,r}\stackrel{d}{\to}\,\mathrm{Pois}(\mu)$ with $\mu = \tfrac{1}{(r+1)!(r+1)}$. Illustrative simulations are visualized in Figure \ref{fig:two}.
\begin{figure}
	\centering
	\includegraphics[height=60mm,width=100mm]{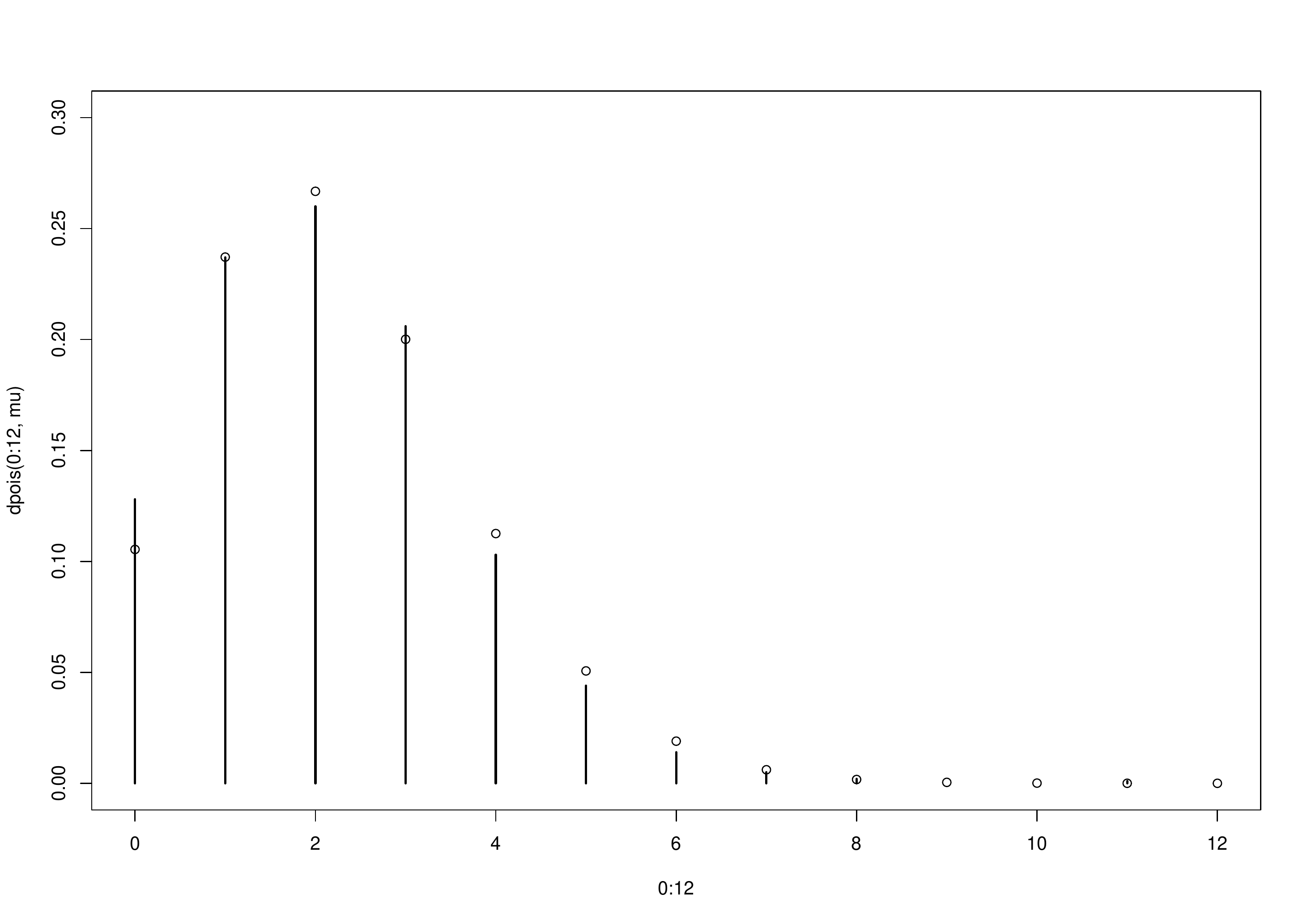}
	\caption{\label{fig:two}Simulations of the overflow in the geometric case with $r=3$, $n=10^6$,  $p_n=a n^{-(r+1)/r}$ with $a=6$ (i.e. $p_{10^6}\approx 1.3\times10^{-6}$ and  $\mu=2.25$) are shown as vertical lines ($10^3$ repetitions) while Poisson probabilities for $k=0,\ldots,12$, $\mathrm{dpois}(0:12,\mu)$, are depicted by circles.}
\end{figure}

The method of Poissonization and de--Poissonization was used in \cite[Theorem~8.2]{HJ} to prove Theorem \ref{thm:rrr}, for $r=1$. The proof we present here is  entirely different and relies on the following martingale-type convergence result from \cite{BKS}.

\begin{theorem}\label{thm:BKS}
	Let $\{Y_{n,k},\,k=1,\ldots,n;\,n\ge 1\}$ be a double sequence of non-negative random variables, adapted to a row-wise increasing double sequence of $\sigma$-fields $\{\cF_{n,k},\,k=1,\ldots,n;\,n\ge 1\}$, and let $\eta>0$. If
	\bel{eq:BKS1}
	\max_{1\le k\le n}\,\E\,(Y_{n,k}|\cF_{n,k-1})\,\stackrel{\P}{\to}0,
	\ee
	\bel{eq:BKS2}
	\sum_{k=1}^n\,\E\,(Y_{n,k}|\cF_{n,k-1})\,\stackrel{\P}{\to} \eta
	\ee
	and, for any $\eps>0$,
	\bel{eq:BKS3}
	\sum_{k=1}^n\,\E\,(Y_{n,k}I_{\{|Y_{n,k}-1|>\eps\}}|\cF_{n,k-1})\,\stackrel{\P}{\to} 0,
	\ee
	then
	$\sum_{k=1}^n\,Y_{n,k}\stackrel{d}{\to}\mathrm{Pois}(\eta)$.
\end{theorem}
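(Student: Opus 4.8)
The plan is to prove convergence of characteristic functions, i.e. that $\E\,e^{itS_n}\to\exp(\eta(e^{it}-1))=:c_t$ for every $t\in\R$, where $S_n=\sum_{k=1}^nY_{n,k}$; by L\'evy's continuity theorem this yields $S_n\stackrel{d}{\to}\mathrm{Pois}(\eta)$, since $c_t$ is the characteristic function of $\mathrm{Pois}(\eta)$. Write $S_{n,k}=\sum_{j=1}^kY_{n,j}$, set $A_{n,k}=\E(e^{itY_{n,k}}-1\mid\cF_{n,k-1})$ and form the predictable product $T_{n,k}=\prod_{j=1}^k(1+A_{n,j})$ with $T_{n,0}=1$. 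The elementary identity $\E(e^{itY_{n,k}}\mid\cF_{n,k-1})=1+A_{n,k}$ shows that $Z_{n,k}=e^{itS_{n,k}}/T_{n,k}$ is a complex martingale with $\E Z_{n,n}=Z_{n,0}=1$, i.e. $\E\big[e^{itS_n}/T_{n,n}\big]=1$ (well defined once the factors are bounded away from $0$, which I arrange below). Everything then reduces to (a) identifying the limit of the predictable product $T_{n,n}$ and (b) transferring from $T_{n,n}$ back to $\E\,e^{itS_n}$.

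For (a) I would first show $\sum_{k=1}^nA_{n,k}\stackrel{\P}{\to}\eta(e^{it}-1)$. Fix $\eps>0$ and split each term over $B_{n,k}=\{|Y_{n,k}-1|\le\eps\}$ and its complement. Using $|e^{itY_{n,k}}-1|\le|t|Y_{n,k}$ (where non-negativity of $Y_{n,k}$ enters), the complementary part is dominated by $|t|\sum_k\E(Y_{n,k}I_{\{|Y_{n,k}-1|>\eps\}}\mid\cF_{n,k-1})$, which vanishes in probability by \eqref{eq:BKS3}. On $B_{n,k}$ one writes $e^{itY_{n,k}}-1=(e^{it}-1)+e^{it}(e^{it(Y_{n,k}-1)}-1)$ with the last term of modulus at most $|t|\eps$, so the near-$1$ part equals $(e^{it}-1)\sum_k\P(B_{n,k}\mid\cF_{n,k-1})$ up to an error $O(|t|\eps\sum_k\P(B_{n,k}\mid\cF_{n,k-1}))$. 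Comparing $\sum_k\P(B_{n,k}\mid\cF_{n,k-1})$ with $\sum_k\E(Y_{n,k}\mid\cF_{n,k-1})$ (they differ by a factor in $[1-\eps,1+\eps]$ on $B_{n,k}$, plus the negligible complementary part) and invoking \eqref{eq:BKS2}, a standard $n\to\infty$ then $\eps\to0$ argument gives the claim. Next, since $|A_{n,k}|\le|t|\,\E(Y_{n,k}\mid\cF_{n,k-1})$, hypothesis \eqref{eq:BKS1} gives $\max_k|A_{n,k}|\stackrel{\P}{\to}0$ and \eqref{eq:BKS2} makes $\sum_k|A_{n,k}|$ stochastically bounded; the expansion $\log(1+A_{n,k})=A_{n,k}+O(|A_{n,k}|^2)$ with $\sum_k|A_{n,k}|^2\le\max_k|A_{n,k}|\sum_k|A_{n,k}|\stackrel{\P}{\to}0$ then upgrades the sum to the product, so $T_{n,n}\stackrel{\P}{\to}c_t$.

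The remaining step (b) is the crux and the main obstacle: passing from $\E[e^{itS_n}/T_{n,n}]=1$ and $T_{n,n}\stackrel{\P}{\to}c_t$ to $\E\,e^{itS_n}\to c_t$. Since $|e^{itS_n}|=1$ but $T_{n,n}^{-1}$ need not be uniformly bounded, convergence in probability alone is not enough, and I would localize on the natural predictable clock. Let $V_{n,k}=\sum_{j=1}^k\E(Y_{n,j}\mid\cF_{n,j-1})$, which is non-decreasing (again by $Y_{n,k}\ge0$) with $V_{n,n}\stackrel{\P}{\to}\eta$ by \eqref{eq:BKS2}, and set $\tau_n=\min\{k\le n:V_{n,k}>\eta+1\}$, with $\tau_n=n$ otherwise, so that $\P(\tau_n<n)\to0$. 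On $\{k\le\tau_n\}$ one has $\sum_{j\le k}|A_{n,j}|\le|t|(\eta+1)$, and together with $\max_k|A_{n,k}|\stackrel{\P}{\to}0$ this confines the factors $1+A_{n,j}$ to a fixed annulus around $1$, so $|T_{n,k\wedge\tau_n}|$ is bounded above and away from $0$; hence the stopped martingale $Z_{n,k\wedge\tau_n}$ is bounded, $\E Z_{n,n\wedge\tau_n}=1$, and $\E\,e^{itS_{n,n\wedge\tau_n}}=\E[T_{n,n\wedge\tau_n}Z_{n,n\wedge\tau_n}]\to c_t$ by bounded convergence, using $T_{n,n\wedge\tau_n}\stackrel{\P}{\to}c_t$. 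Finally $\E\,e^{itS_{n,n\wedge\tau_n}}$ and $\E\,e^{itS_n}$ differ only on $\{\tau_n<n\}$, whose probability vanishes, with integrands bounded by $1$, so $\E\,e^{itS_n}\to c_t$, completing the argument. The delicate point throughout is precisely this localization/uniform-integrability transfer; the estimates in (a) are routine once \eqref{eq:BKS1}--\eqref{eq:BKS3} are in hand.
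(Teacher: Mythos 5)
You should first be aware that the paper does not prove this statement at all: Theorem \ref{thm:BKS} is quoted verbatim from the reference [Be\'ska et al.\ (1982)] and used as a black box in the proof of Theorem \ref{thm:rrr}. So there is no in-paper proof to match; what you have produced is a self-contained proof along the classical lines used for such results (conditional characteristic functions and the multiplicative predictable compensator, in the spirit of Brown--Eagleson/Freedman and of the martingale methods in the cited BKS paper). Your step (a) is correct as written: the bound $|e^{itY}-1|\le |t|Y$ for $Y\ge 0$ handles the complement of $B_{n,k}$ via \eqref{eq:BKS3} (including the mass at $Y_{n,k}=0$), the squeeze $\tfrac{1}{1+\eps}\bigl(\E(Y_{n,k}|\cF_{n,k-1})-\E(Y_{n,k}I_{B_{n,k}^c}|\cF_{n,k-1})\bigr)\le\P(B_{n,k}|\cF_{n,k-1})\le\tfrac{1}{1-\eps}\E(Y_{n,k}|\cF_{n,k-1})$ together with \eqref{eq:BKS2} identifies $\sum_k A_{n,k}\stackrel{\P}{\to}\eta(e^{it}-1)$, and the $\log(1+z)=z+O(|z|^2)$ upgrade to $T_{n,n}\stackrel{\P}{\to}c_t$ is legitimate because $\sum_k|A_{n,k}|^2\le\max_k|A_{n,k}|\cdot\sum_k|A_{n,k}|\stackrel{\P}{\to}0$ by \eqref{eq:BKS1} and \eqref{eq:BKS2}.

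In step (b), however, your localization as stated does not quite deliver the bounds you claim, for two reasons. First, with $\tau_n=\min\{k:V_{n,k}>\eta+1\}$ the inequality $\sum_{j\le k}|A_{n,j}|\le|t|(\eta+1)$ fails at $k=\tau_n$ itself: there is an overshoot, $V_{n,\tau_n}\le \eta+1+\max_j\E(Y_{n,j}|\cF_{n,j-1})$, and the max is only small \emph{in probability}, not pointwise. Second, and for the same reason, the claim that the factors $1+A_{n,j}$ are ``confined to a fixed annulus around $1$'' on $\{j\le\tau_n\}$ cannot be derived from $\max_k|A_{n,k}|\stackrel{\P}{\to}0$: on the whole probability space some factor may be arbitrarily close to (or equal to) $0$, so $Z_{n,k\wedge\tau_n}$ is neither well defined nor bounded as written, and $\E Z_{n,n\wedge\tau_n}=1$ is not yet justified. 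The standard repair is to build the needed control into the stopping rule itself, exploiting predictability: since $V_{n,k+1}$ and $A_{n,k+1}$ are $\cF_{n,k}$-measurable, the time
\begin{equation*}
\tau_n=\min\Bigl\{k\le n-1:\;V_{n,k+1}>\eta+1\ \text{ or }\ |A_{n,k+1}|>\tfrac12\Bigr\}\wedge n
\end{equation*}
is a legitimate stopping time that stops \emph{before} any offending increment; then $\sum_{j\le n\wedge\tau_n}|A_{n,j}|\le|t|(\eta+1)$ and $|A_{n,j}|\le\tfrac12$ hold pointwise up to the stopped index, so $|T_{n,k\wedge\tau_n}|$ is bounded above and below by fixed positive constants, the stopped $Z$ is a bounded martingale with mean $1$, and $\P(\tau_n<n)\le\P(V_{n,n}>\eta+1)+\P(\max_k|A_{n,k}|>\tfrac12)\to0$ by \eqref{eq:BKS1} and \eqref{eq:BKS2}. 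With this one-line modification the rest of your transfer argument (dominated convergence for $\E[(T_{n,n\wedge\tau_n}-c_t)Z_{n,n\wedge\tau_n}]$, then discarding $\{\tau_n<n\}$ at cost $2\P(\tau_n<n)$) goes through verbatim, so the proof is correct modulo this fixable imprecision --- which you yourself correctly identified as the delicate point.
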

In the proof of Theorem~\ref{thm:rrr} we use the following consequences of \eqref{eq:inten1} and \eqref{eq:jedno}.
\begin{lemma}
	\label{lem:to_zero} Let $s$ be a positive integer.  If \eqref{eq:inten1} and \eqref{eq:jedno} hold, then
	\begin{equation}
	\label{eq:zero1}
	n^s\E\,p_{X_n}^s\to 0
	\end{equation}
	and
	\begin{equation}
	\label{eq:zero2}
	n^{s+1}\E\,p_{X_n}^s\to0,\quad s>r.
	\end{equation}
\end{lemma}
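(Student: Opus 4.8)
The plan is to reduce both statements to the single bounded quantity $n^{r+1}\E p_{X_n}^r$ from \eqref{eq:inten1}, using elementary estimates on the power sums $\E p_{X_n}^s=\sum_{m\in M_n}p_{n,m}^{s+1}$.

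First I would establish \eqref{eq:zero2}, i.e. the case $s>r$. Since every $p_{n,m}\le p_n^*$, for $s\ge r$ we may peel off the excess factor, $p_{n,m}^{s+1}=p_{n,m}^{s-r}p_{n,m}^{r+1}\le (p_n^*)^{s-r}p_{n,m}^{r+1}$, and summing over $m$ yields $\E p_{X_n}^s\le (p_n^*)^{s-r}\E p_{X_n}^r$. Multiplying by $n^{s+1}$ and regrouping the powers of $n$ gives
\[
n^{s+1}\E p_{X_n}^s\le (np_n^*)^{s-r}\,n^{r+1}\E p_{X_n}^r .
\]
For $s>r$ the first factor tends to $0$ by \eqref{eq:jedno} (the exponent $s-r$ is a positive integer) while the second is bounded by \eqref{eq:inten1}, so the product vanishes, proving \eqref{eq:zero2}.

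For \eqref{eq:zero1} I would split on the sign of $s-r$. When $s>r$, since $n\ge 1$ I bound crudely $n^s\E p_{X_n}^s\le n^{s+1}\E p_{X_n}^s\to 0$ by the \eqref{eq:zero2} just proved. The remaining range $1\le s\le r$ is where the $p_n^*$ bound runs the wrong way and is the main obstacle: one must instead exploit the normalization $\sum_{m}p_{n,m}=1$. I would do this by H\"older's inequality (equivalently, by convexity of $u\mapsto\log\sum_m p_{n,m}^u$), interpolating the exponent $s+1$ between $1$ and $r+1$. Writing $s+1=\theta\cdot 1+(1-\theta)(r+1)$ with $\theta=(r-s)/r\in[0,1]$ and $1-\theta=s/r$,
\[
\E p_{X_n}^s=\sum_m p_{n,m}^{s+1}\le\Big(\sum_m p_{n,m}\Big)^{\theta}\Big(\sum_m p_{n,m}^{r+1}\Big)^{1-\theta}=(\E p_{X_n}^r)^{s/r}.
\]

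Raising to the power $s/r$ then collapses everything onto the controlled quantity: $n^s\E p_{X_n}^s\le n^s(\E p_{X_n}^r)^{s/r}=(n^r\E p_{X_n}^r)^{s/r}$. Since $n^r\E p_{X_n}^r=\tfrac1n\,n^{r+1}\E p_{X_n}^r\to 0$ by \eqref{eq:inten1}, and the exponent $s/r$ is strictly positive, the right-hand side tends to $0$, which settles $1\le s\le r$ and completes \eqref{eq:zero1}. I expect no difficulty beyond the interpolation step: the estimates for $s\ge r$ are one-line regroupings, whereas the sub-critical range $s<r$ genuinely needs the total-mass constraint rather than the supremum bound.
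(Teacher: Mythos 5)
Your proof is correct, and for \eqref{eq:zero2} it coincides exactly with the paper's: peel off $p_{n,m}^{s-r}\le (p_n^*)^{s-r}$ to get $n^{s+1}\E p_{X_n}^s\le (np_n^*)^{s-r}\,n^{r+1}\E p_{X_n}^r$, then use \eqref{eq:jedno} on the first factor and boundedness from \eqref{eq:inten1} on the second. For \eqref{eq:zero1}, however, you take a genuinely different and more elaborate route than necessary. The paper's argument is a one-liner valid for \emph{every} positive integer $s$: since $\E p_{X_n}^s=\sum_m p_{n,m}^{s}\,p_{n,m}\le (p_n^*)^s\sum_m p_{n,m}=(p_n^*)^s$, one gets $n^s\E p_{X_n}^s\le (np_n^*)^s\to 0$ directly from \eqref{eq:jedno}. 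Your claim that for $1\le s\le r$ ``the $p_n^*$ bound runs the wrong way'' and that one ``genuinely needs the total-mass constraint'' is therefore mistaken --- the supremum bound works uniformly in $s$ precisely because the extra factor $p_{n,m}$ coming from the expectation already supplies the total mass $\sum_m p_{n,m}=1$. That said, your H\"older interpolation $\E p_{X_n}^s\le (\E p_{X_n}^r)^{s/r}$ is correctly set up ($\theta=(r-s)/r$, endpoints trivial) and yields $n^s\E p_{X_n}^s\le (n^r\E p_{X_n}^r)^{s/r}\to 0$; what it buys is that \eqref{eq:zero1} in the range $s\le r$ then follows from \eqref{eq:inten1} alone, with no appeal to \eqref{eq:jedno}, whereas the paper's version uses \eqref{eq:jedno} alone. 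Since both hypotheses are in force, either derivation is acceptable; yours is just longer and rests on a misdiagnosis of where the difficulty lies.
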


\begin{proof}
	Since $n^s\E\,p_{X_n}^s\le (np_n^*)^{s}$, \eqref{eq:zero1} follows from \eqref{eq:jedno}. Also, \eqref{eq:zero2} follows from \eqref{eq:inten1} and \eqref{eq:jedno} since $$n^{s+1}\E\,p_{X_n}^s\le (np_n^*)^{s-r}n^{r+1}\E\,p_{X_n}^r.$$
\end{proof}
We also need the simple estimate shown below, for the tail of a binomial sum.
\begin{lemma}
	\label{lem:bin_bdd}
	Let $m,n$ be positive integers, such that $m\le n$, and let $p\in(0,1)$. Then
	\begin{equation}
	\label{eq:binineq}
	\sum_{i=m}^n{n\choose i}p^i(1-p)^{n-i}\le \frac{(np)^m}{m!}.
	\end{equation}
\end{lemma}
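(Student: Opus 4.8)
The plan is to reduce the binomial tail to a single leading term multiplied by a complete binomial sum, and then bound that term crudely. The left-hand side is exactly $\P(\Bin(n,p)\ge m)$, and the right-hand side is the leading term $\binom{n}{m}p^m$ inflated by replacing the falling factorial $n(n-1)\cdots(n-m+1)$ with $n^m$. So I would first prove the sharper inequality
$$\sum_{i=m}^n\binom{n}{i}p^i(1-p)^{n-i}\le\binom{n}{m}p^m,$$
and then finish with the elementary estimate $\binom{n}{m}\le n^m/m!$.

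For the sharper inequality the key step is the subset-of-a-subset identity $\binom{n}{i}\binom{i}{m}=\binom{n}{m}\binom{n-m}{i-m}$, valid for $m\le i\le n$. Since $\binom{i}{m}\ge 1$ whenever $i\ge m$, this yields $\binom{n}{i}\le\binom{n}{m}\binom{n-m}{i-m}$. Substituting into each summand, factoring out $\binom{n}{m}p^m$, writing $p^i(1-p)^{n-i}=p^m\cdot p^{\,i-m}(1-p)^{(n-m)-(i-m)}$, and re-indexing by $j=i-m$, I would dominate the tail by
$$\binom{n}{m}p^m\sum_{j=0}^{n-m}\binom{n-m}{j}p^j(1-p)^{(n-m)-j}.$$
By the binomial theorem the remaining sum equals $(p+(1-p))^{n-m}=1$, which establishes the sharper bound.

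Finally, since $\binom{n}{m}=\frac{n(n-1)\cdots(n-m+1)}{m!}\le\frac{n^m}{m!}$, multiplying through by $p^m$ produces $\binom{n}{m}p^m\le(np)^m/m!$, which is \eqref{eq:binineq}. There is no serious obstacle here; the whole argument hinges on recognizing the combinatorial identity that lets the sub-binomial sum collapse to $1$. The only point requiring a little care is the re-indexing of the exponent of $1-p$: one must verify that $n-i$ becomes precisely $(n-m)-(i-m)$, so that the collapsed sum is a genuine binomial expansion over $n-m$ trials and no residual power of $1-p$ is left behind.
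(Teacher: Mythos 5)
Your proof is correct, but it takes a genuinely different route from the paper. You establish the sharper intermediate bound $\sum_{i=m}^n\binom{n}{i}p^i(1-p)^{n-i}\le\binom{n}{m}p^m$ via the subset-of-a-subset identity $\binom{n}{i}\binom{i}{m}=\binom{n}{m}\binom{n-m}{i-m}$ together with $\binom{i}{m}\ge1$, collapsing the residual sum by the binomial theorem, and then finish with $\binom{n}{m}\le n^m/m!$. (Your argument is in effect an algebraic form of the union bound: the event $\{\Bin(n,p)\ge m\}$ is covered by the $\binom{n}{m}$ events that a fixed $m$-subset of trials all succeed.) The paper instead argues by induction on $n$, writing $\P(B_{n+1}\ge m)=\P(B_n\ge m-1)p+\P(B_n\ge m)(1-p)$ and using the inductive bounds for $m-1$ and $m$ together with the elementary inequality $mn^{m-1}+n^m(1-p)\le(n+1)^m$. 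Your approach is arguably more transparent and yields the slightly stronger conclusion with $\binom{n}{m}$ in place of $n^m/m!$; the paper's induction avoids the combinatorial identity and goes directly to the stated form, at the cost of carrying two values of $m$ through the induction. Both are complete proofs; your re-indexing of the exponent of $1-p$ is handled correctly, and the identity you invoke is valid for all $m\le i\le n$.
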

\begin{proof}
	The left-hand side of \eqref{eq:binineq} is  $\P(B_n\ge m)$, where $B_n$ has distribution $\Bin(n,p)$. Arguing by induction on $n$, we have
	\begin{equation*}
	\begin{split}
	\P(B_{n+1}\ge m)&=\P(B_n\ge m-1)p+\P(B_n\ge m)(1-p)\\
	&\le \frac{(np)^{m-1}}{(m-1)!}p+\frac{(np)^m}{m!}(1-p)\\
	&\le \frac{((n+1)p)^m}{m!},
	\end{split}
	\end{equation*}
	where the last inequality follows from $mn^{m-1}+n^m(1-p)\le(n+1)^m$.
\end{proof}

\section{Proof of Theorem \ref{thm:rrr}}
\begin{proof}
	We  show that for $Y_{n,k}$ defined in \eqref{eq:Ynk}, conditions \eqref{eq:BKS1}, \eqref{eq:BKS2} with $\eta=\mu$, and \eqref{eq:BKS3} are satisfied. First we note that \eqref{eq:BKS3} is trivially satisfied because, for $\eps<1$, $Y_{n,k}=0$ if and only if $I_{\{|Y_{n,k}-1|>\eps\}}=1$.

	The rest of the proof is divided into three steps. In {\bf Step  I} we check that \eqref{eq:BKS1} is satisfied. Then we prove that \eqref{eq:BKS2} holds in quadratic mean, that is, $$\E\Big(\sum_{k=1}^n\,\E\,(Y_{n,k}|\cF_{n,k-1})- \mu\Big)^2\to0.$$ To that end we show  that $\sum_{k=1}^n\,\E\,Y_{n,k}\to\mu$ and $\var\sum_{k=1}^n\E\,(Y_{n,k}|\cF_{n,k-1})\to 0$ in {\bf Step II} and {\bf Step III}, respectively.
	
	{\bf\noindent Step  I}: We prove \eqref{eq:BKS1} using \eqref{eq:form1}. Clearly, $I_{\{N_{n,k}(m)\ge r\}}\le I_{\{N_{n,l}(m)\ge r\}}$, for $k\le l$, so
	\begin{equation*}
	\max_{1\le k\le n}\,\E\,(Y_{n,k}|\cF_{n,k-1})=\E\,(Y_{n,n}|\cF_{n,n-1}).
	\end{equation*}
	Note also that, due to \eqref{eq:expe}, \eqref{eq:binineq} and \eqref{eq:zero1},
	\begin{equation*}
	\E Y_{n,n}=\E\,\sum_{i=r}^{n-1}{n-1\choose i}p_{X_n}^{i}q_{X_n}^{n-1-i}\le n^r\E\,p_{X_n}^r\to0.
	\end{equation*}
	Consequently, Markov's inequality implies $\E\,(Y_{n,n}|\cF_{n,n-1})\stackrel{\P}{\to}0$ and thus \eqref{eq:BKS1} follows.
	
	{\bf Step II}:
	To prove that $\lim_n\sum_{k=1}^n\E\,Y_{n,k}= \mu$ we show that $\limsup_n$ and $\liminf_n$ are respectively bounded above and below by $\mu$.
	From \eqref{eq:expe}, \eqref{eq:binineq} and \eqref{eq:inten1}
	\begin{equation*}
	\begin{split}
	\sum_{k=1}^n\E\, Y_{n,k}&=\E\,\sum_{k=1}^n\sum_{i=r}^{k-1}{k-1\choose i}p_{X_n}^{i}q_{X_n}^{k-1-i}\\
	&\le\E\,\sum_{k=1}^n\frac{(k-1)^rp_{X_n}^r}{r!}\le \frac{n^{r+1}}{(r+1)!}\E\,p_{X_n}^r\to\mu,
	\end{split}
	\end{equation*}
	so $\limsup_n\,\sum_{k=1}^n\,\E\,Y_{n,k}\le \mu$.
	
	Additionally, since by \eqref{eq:expe},  $\E\,Y_{n,k}=\P(N_{n,k}(X_n)\ge r)\ge \P(N_{n,k}(X_n)=r)$ and $(1-p)^k\ge 1-kp, p\in(0,1)$,  we have
	\begin{equation}
	\label{eq:ineq}
	\begin{split}
	\sum_{k=1}^n\,\E\,Y_{n,k}&\ge \sum_{k=1}^n{k-1\choose r}\E\,p_{X_n}^{r}q_{X_n}^{k-1-r}\\
	&\ge \E\,p_{X_n}^{r}\,\sum_{k=1}^n{k-1\choose r}-\E\,p_{X_n}^{r+1}\,\sum_{k=1}^n{k-1\choose r}(k-1-r).
	\end{split}
	\end{equation}
	Further, observe  that
	\begin{equation*}
	\frac{(r+1)!}{n^{r+1}}\sum_{k=1}^n{k-1\choose r}\to1\quad\text{and}\quad \frac{r!(r+2)}{n^{r+2}}\sum_{k=1}^n{k-1\choose r}(k-1-r)\to1.
	\end{equation*}
	Thus, by \eqref{eq:inten1}  and \eqref{eq:zero2}, the rhs of  \eqref{eq:ineq} converges to $\mu$ and  so, $\liminf\limits_n\sum\limits_{k=1}^n\E\,Y_{n,k}\ge \mu$.
	
	{\bf Step III}:
	We prove that $W_n:=\var\,\sum_{k=1}^n\,\E\,(Y_{n,k}|\cF_{n,k-1})\to 0$, relying on the NOD property of $N_{n,k}(m_1), N_{n,l}(m_2)$, for distinct $m_1,m_2\in M_n$. In what follows we compute and bound some expectations that add up to $W_n$. First note from \eqref{eq:war} that 
	\begin{equation*}
	\begin{split}
	W_n&=\sum_{k,l=1}^{n}\cov(\E\,(Y_{n,k}|\cF_{n,k-1}),\E\,(Y_{n,l}|\cF_{n,l-1}))\\
	&=\sum_{k,l=1}^{n}\cov(I_{\{N_{n,k}(X_n)\ge r\}},I_{\{N_{n,l}(Y_n)\ge r\}}).
	\end{split}
	\end{equation*}
	For $U,V$ square-integrable random variables and $\cal G$ a $\sigma$-algebra, let the conditional covariance be defined as
	\begin{equation*}
	\cov(U,V|{\cal G})=\E(UV|{\cal G})-\E(U|{\cal G})\E(V|{\cal G}).
	\end{equation*}
	Also, let $I_k(m)=I_{\{N_{n,k}(m)\ge r\}}$ (for simplicity) and $k\wedge l=\min\{k,l\}$. Then, by the iid assumption of $X_{n,1},\ldots,X_{n,n}, X_n,Y_n$, we have
	\begin{equation}
	\label{eq:condcov}
	\cov(I_k(X_n),I_l(Y_n)|X_n,Y_n)=\E(I_k(X_n)I_l(Y_n)|X_n,Y_n)-\E(I_k(X_n)|X_n)\E(I_l(Y_n)|Y_n).
	\end{equation}
	Furthermore,
	\begin{equation}
	\label{eq:X=Y}
	\begin{split}
	\E(I_k(X_n)I_l(Y_n)|X_n,Y_n)I_{\{X_n=Y_n\}}&=\E(I_{\{X_n=Y_n\}}I_k(X_n)I_l(Y_n)|X_n,Y_n)\\
	&=\E(I_{\{X_n=Y_n\}}I_k(X_n)I_l(X_n)|X_n,Y_n)\\
	&=\E(I_{k\wedge l}(X_n)|X_n)I_{\{X_n=Y_n\}},
	\end{split}
	\end{equation}
	where the last equality follows from $I_k(m)\le I_l(m)$, for $k\le l$, because $N_{n,k}(m)\ge r$ implies $N_{n,l}(m)\ge r$. So, from \eqref{eq:condcov} and \eqref{eq:X=Y}, we get
	\begin{equation}
	\label{eq:condcoveq}
	\cov(I_k(X_n),I_l(Y_n)|X_n,Y_n)I_{\{X_n=Y_n\}}\le\E(I_{k\wedge l}(X_n)|X_n)I_{\{X_n=Y_n\}}.
	\end{equation}
	Furthermore, by the NOD property \eqref{eq:covH},
	\begin{equation}
	\label{eq:Xnot=Y}
	\begin{split}
	\E(I_k(X_n)I_l(Y_n)|X_n,Y_n)&I_{\{X_n\ne Y_n\}}=\E(I_{\{X_n\not=Y_n\}}I_k(X_n)I_l(Y_n)|X_n,Y_n)\\
	&\le\E(I_{\{X_n\not=Y_n\}}I_k(X_n)|X_n,Y_n)\E(I_{\{X_n\not=Y_n\}}I_l(Y_n)|X_n,Y_n)\\
	&=\E(I_{k}(X_n)|X_n)\E(I_{l}(Y_n)|Y_n)I_{\{X_n\not=Y_n\}}.
	\end{split}
	\end{equation}
	Hence, from \eqref{eq:condcov} and \eqref{eq:Xnot=Y}, we have
	\begin{equation}
	\label{eq:condcovnoteq}
	\cov(I_k(X_n),I_l(Y_n)|X_n,Y_n)I_{\{X_n\not=Y_n\}}\le0.
	\end{equation}
	And, finally, from \eqref{eq:condcoveq} and \eqref{eq:condcovnoteq},
	\begin{equation}
	\label{eq:condcovbound}
	\cov(I_k(X_n),I_l(Y_n)|X_n,Y_n)\le\E(I_{k\wedge l}(X_n)|X_n)I_{\{X_n=Y_n\}},
	\end{equation}
	which, after taking expectation, yields
	\begin{equation}
	\label{eq:covbound}
	\cov(I_k(X_n),I_l(Y_n))\le\E(I_{k\wedge l}(X_n)I_{\{X_n=Y_n\}})=\E(I_{k\wedge l}(X_n)\pxn).
	\end{equation}
	Also, by \eqref{eq:binineq},
	\begin{equation*}
	\E(I_{k\wedge l}(X_n)\pxn|X_n)= \sum_{i=r}^{k\wedge l-1}{k\wedge l-1\choose i}p_{X_n}^{i}q_{X_n}^{k\wedge l-1-i}\pxn\le(k-1)^rp^{r+1}_{X_n}.
	\end{equation*}
	Last, taking expectation above and adding over $k$ and $l$, from \eqref{eq:covbound} we obtain
	\begin{equation*}
	W_n\le \sum_{k,l=1}^n(k-1)^r\E\, p_{X_n}^{r+1}\le n^{r+2}\E\,p_{X_n}^{r+1}\to0,
	\end{equation*}
	where convergence to 0 follows from \eqref{eq:zero2}. Finally, since $W_n\ge 0$, it follows that $W_n\to 0$.	
\end{proof}
\section{Normal asymptotics for overflow}

The following theorem gives conditions under which the overflow is asymptotically normal.

\begin{theorem}
	\label{norasy}
	Assume that $np_n^*\to\lambda \ge 0$ and that $n^{r+1}\E\,p_{X_n}^r\to\infty$. Then
	$$
	\frac{V_{n,r}-\E\,V_{n,r}}{\sqrt{\var\,V_{n,r}}}\stackrel{d}{\to} \mathrm{N}(0,1).
	$$
\end{theorem}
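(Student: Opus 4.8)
The plan is to prove asymptotic normality by applying a martingale central limit theorem to the Doob martingale of $V_{n,r}$ relative to the filtration $(\cF_{n,k})_{k=0}^n$. Writing $C_{n,m}=N_{n,n+1}(m)$ for the final contents of box $m$, we have the convenient representation $V_{n,r}=\sum_{m\in M_n}(C_{n,m}-r)^+$. Set $s_n^2=\var\,V_{n,r}$ and
\[
\xi_{n,k}=\E(V_{n,r}\mid\cF_{n,k})-\E(V_{n,r}\mid\cF_{n,k-1}),
\]
so that $V_{n,r}-\E\,V_{n,r}=\sum_{k=1}^n\xi_{n,k}$ and, by orthogonality of martingale differences, $s_n^2=\sum_{k=1}^n\E\,\xi_{n,k}^2$. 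The cheapest but most useful observation is that $V_{n,r}$ has the bounded differences property with constant $1$: reassigning a single ball from box $a$ to box $b$ alters $V_{n,r}$ by $-I_{\{C_{n,a}>r\}}+I_{\{C_{n,b}\ge r\}}\in\{-1,0,1\}$. Since the $X_{n,j}$ are independent, this forces $|\xi_{n,k}|\le 1$ for every $k$, so the Lindeberg condition of the martingale CLT reduces to the single requirement $s_n\to\infty$.

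First I would settle the variance. That $\E\,V_{n,r}\to\infty$ is easy: from \eqref{eq:expe}, $\E\,Y_{n,k}\ge\binom{k-1}{r}(1-p_n^*)^n\E\,p_{X_n}^r$, and summing with $(1-p_n^*)^n\to e^{-\lambda}$ gives $\E\,V_{n,r}\ge e^{-\lambda}(1+o(1))\binom{n}{r+1}\E\,p_{X_n}^r\to\infty$. The negative dependence of the counts yields $\cov((C_{n,m}-r)^+,(C_{n,m'}-r)^+)\le 0$ for $m\ne m'$, hence the upper bound $s_n^2\le\sum_m\var((C_{n,m}-r)^+)$, which is of order $n^{r+1}\E\,p_{X_n}^r$. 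The genuinely non-trivial part is the matching lower bound $s_n^2\to\infty$: one must show that the (nonpositive) cross-covariances do not fully cancel the diagonal. This I would do by a direct second-moment computation built on \eqref{eq:war} and the NOD inequality \eqref{eq:covH}, establishing $s_n^2\asymp n^{r+1}\E\,p_{X_n}^r$.

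The core of the proof is the conditional variance condition $s_n^{-2}\sum_{k=1}^n\E(\xi_{n,k}^2\mid\cF_{n,k-1})\stackrel{\P}{\to}1$. Because $\E\sum_k\E(\xi_{n,k}^2\mid\cF_{n,k-1})=s_n^2$ identically, it suffices to prove that the bracket concentrates, i.e. $\var\big(\sum_k\E(\xi_{n,k}^2\mid\cF_{n,k-1})\big)=o(s_n^4)$. To compute it I would split $\xi_{n,k}=D_{n,k}+R_{n,k}$, with direct part $D_{n,k}=Y_{n,k}-\E(Y_{n,k}\mid\cF_{n,k-1})$ and future-effect part $R_{n,k}=\sum_{j>k}[\E(Y_{n,j}\mid\cF_{n,k})-\E(Y_{n,j}\mid\cF_{n,k-1})]$, which records how revealing $X_{n,k}$ raises the predicted overflow of later balls. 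The $D_{n,k}$ contribution to the bracket is $\sum_k\E(Y_{n,k}\mid\cF_{n,k-1})(1-\E(Y_{n,k}\mid\cF_{n,k-1}))$, whose fluctuation is governed by the quantity $W_n$ appearing in Step III of the proof of Theorem \ref{thm:rrr}; since $W_n\le n^{r+2}\E\,p_{X_n}^{r+1}\le np_n^*\,n^{r+1}\E\,p_{X_n}^r=O(s_n^2)=o(s_n^4)$, this part concentrates.

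The hard part will be the future-effect terms $R_{n,k}$, which are present only when $\lambda>0$: one extra ball in box $m$ shifts the predicted overflow of the remaining $\approx n$ balls by an amount of order $np_n^*\to\lambda$, so $R_{n,k}=O(\lambda)$ is not negligible and couples both with $D_{n,k}$ and across indices $k$. Controlling $\E(R_{n,k}^2\mid\cF_{n,k-1})$, the cross term $\E(D_{n,k}R_{n,k}\mid\cF_{n,k-1})$, and above all the covariances of these conditional quantities over $k$ and $l$ is where the real effort lies; the available tools are the binomial tail bound of Lemma \ref{lem:bin_bdd}, moment estimates in the spirit of Lemma \ref{lem:to_zero} adapted to $np_n^*\to\lambda$, and the extended negative-association inequalities \eqref{eq:PA2}--\eqref{eq:covH}, which bound the relevant four-box probabilities by products of marginals and thereby force the bracket covariances to be $o(s_n^4)$. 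When $\lambda=0$ the $R_{n,k}$ are asymptotically negligible and the argument collapses to the clean statement that $\sum_k\E(\xi_{n,k}^2\mid\cF_{n,k-1})\sim\sum_k\var(Y_{n,k}\mid\cF_{n,k-1})$ concentrates around $s_n^2\sim\E\,V_{n,r}$.
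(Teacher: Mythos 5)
Your architecture is the same as the paper's: the Doob martingale $\xi_{n,k}=\E(V_{n,r}\mid\cF_{n,k})-\E(V_{n,k}\mid\cF_{n,k-1})$ coincides with the paper's $d_{n,k}$ of \eqref{eq:dnk}, your split $\xi_{n,k}=D_{n,k}+R_{n,k}$ is exactly their $d_{n,k}=(Y_{n,k}-\E(Y_{n,k}\mid\cF_{n,k-1}))+e_{n,k}$, and the three things you propose to verify (uniform boundedness, $s_n^2\asymp n^{r+1}\E\,p_{X_n}^r$, concentration of the conditional variance) are precisely Subsections 4.1--4.3. Your bounded-differences argument giving $|\xi_{n,k}|\le 1$ is a clean improvement on the paper's bound $|d_{n,k}|\le 2$, and the upper bound $s_n^2\le\sum_m\var((C_{n,m}-r)^+)$ via negative association is fine.

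The gap is that the two steps you yourself flag as ``genuinely non-trivial'' and ``the hard part'' are exactly where the proof lives, and your proposal contains no idea that closes either. For the lower bound on $s_n^2$, citing \eqref{eq:war} and \eqref{eq:covH} is not enough: NOD tells you the cross-covariances are nonpositive, which is the obstruction, not the remedy. What the paper actually does (Lemma \ref{lem:bd4var}) is compute $\var\,d_{n,k}$ exactly as in \eqref{eq:vardnk} in terms of $T_{n,k}$ of \eqref{eq:Tnk}, use NOD to upper-bound the subtracted term $\E\,I_{\{X_n\ne Y_n\}}T_{n,k}(X_n)T_{n,k}(Y_n)$, arrive at $\var\,d_{n,k}\ge(1-p_n^*\text{-correction})\,\E\,\var(T_{n,k}(X_n)\mid X_n)$, and then prove the pointwise bound $\var\,T_{n,k}(x)\ge\binom{k-1}{r}p_x^r(1-p_n^*)^{2n}$ by writing $T_{n,k}(x)=\sum_j P_j(x)I_j(x)$ as a sum with pairwise nonnegative covariances and keeping only the $j=0$ term; this is where the $e^{-2\lambda}$ in the liminf comes from, and nothing in your sketch produces it. Similarly, for the bracket, asserting that \eqref{eq:PA2}--\eqref{eq:covH} ``force the covariances to be $o(s_n^4)$'' skips the actual content of Lemma \ref{lem:vcv}: one must decompose $\E(d_{n,k}^2\mid\cF_{n,k-1})=\alpha_{n,k}-\beta_{n,k}$, express each variance and covariance via independent copies $X_n',Y_n'$, and show that every off-diagonal covariance is $O(n^r\E\,p_{X_n}^{r+1})$ --- a bound that, after summing over $\binom{n}{2}$ pairs, is $O(np_n^*\cdot n^{r+1}\E\,p_{X_n}^r)=o(s_n^4)$ only because of the standing hypothesis $np_n^*\to\lambda$. (A minor further point: the fluctuation of your $D$-contribution $\sum_k\E(Y_{n,k}\mid\cF_{n,k-1})(1-\E(Y_{n,k}\mid\cF_{n,k-1}))$ is not controlled by the $W_n$ of Step III of Theorem \ref{thm:rrr} alone; the variance of the quadratic term $\sum_k\E(Y_{n,k}\mid\cF_{n,k-1})^2$ needs its own estimate.) So the plan is the right one, but as written it identifies the difficulties without resolving them.
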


{\bf \noindent Examples}\\
\noindent$\bullet$ Consider the uniform case, i.e. $p_{n,j}=1/m_n$, $j\in M_n=\{1,\ldots,m_n\}$. Then by the above theorem we get
\begin{equation*}
\frac{n^{r+1}}{m_n^r}\to \infty \quad \mbox{and}\quad \frac{n}{m_n}\to \lambda\ge 0 \qquad\Longrightarrow \qquad \frac{V_{n,r}-\E\,V_{n,r}}{\sqrt{\var\,V_{n,r}}}\stackrel{d}{\to} \mathrm{N}(0,1).
\end{equation*}
Note that $m_n=\kappa n^a$ with $a\in[1,\,1+r^{-1})$ yields normal asymptotics.\\
\noindent$\bullet$ Consider the geometric case, $p_{n,j}=p_n(1-p_n)^j$, $j\ge 0$, with $p_n=n^{-a}$ and $a\in[1,\,1+r^{-1})$. Then \eqref{geo} yields
$$
n^{r+1}\E\,p_{X_n}^r=\tfrac{n^{r+1-ra}}{r+1+o(1)}\to \infty.
$$
Moreover,
$$
np_n^*=np_n=n^{1-a}\to \left\{\begin{array}{ll} 1, & a=1, \\
0, & 1<a<1+r^{-1}.\end{array} \right.
$$
Thus, asymptotic normality of $V_{n,r}$ follows from the above theorem. Illustrative simulations are visualized in Figures \ref{fig:three} and \ref{fig:four}.

\begin{figure}
	\centering
	\includegraphics[height=60mm,width=100mm]{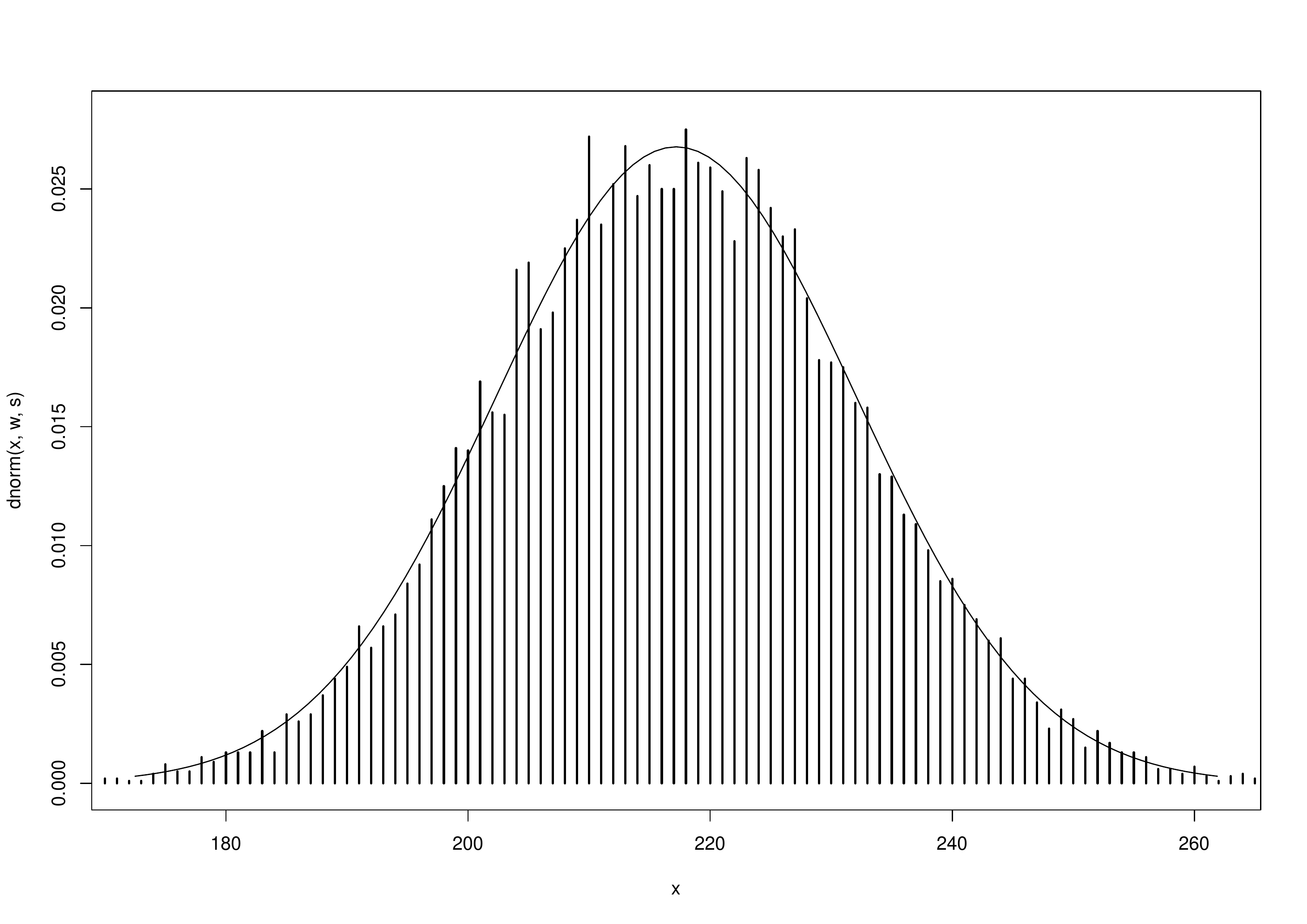}
	\caption{\label{fig:three} Simulations of the overflow in the uniform case with $r=2$, $n=10^4$,  $m=n^a$ with $a=1.1$ (i.e. $m=25118$)  are shown as vertical lines ($10^4$ repetitions) vs. the graph of the normal density $\mathrm{dnorm}(x,w,s)$, where $w=217.2$ and $s=14.9$ are empirical mean and standard deviation, respectively.}
\end{figure}

\begin{figure}
	\centering
	\includegraphics[height=60mm,width=100mm]{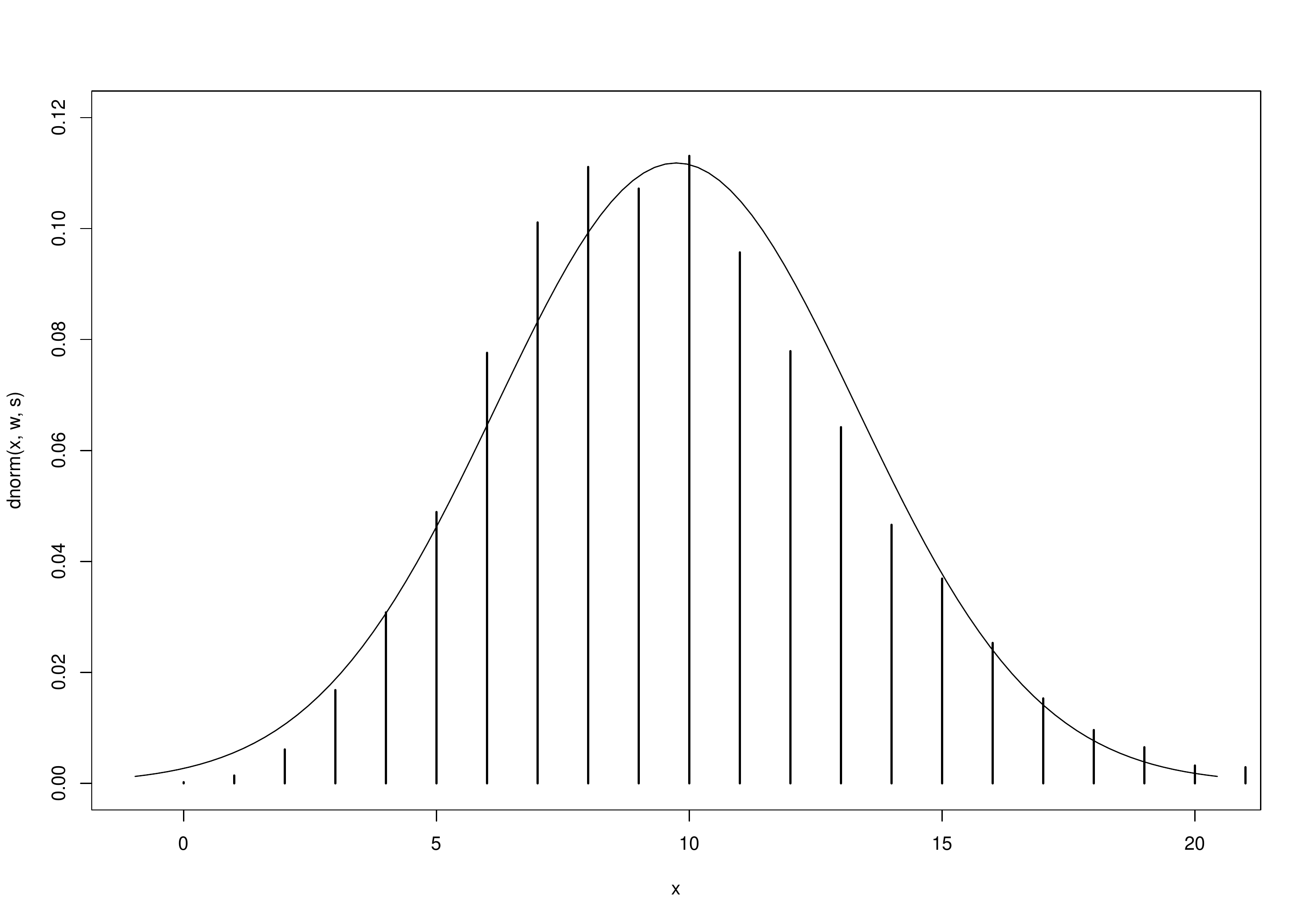}
	\caption{\label{fig:four} Simulations of the overflow in the geometric case with $r=4$, $n=10^4$,  $a=1$  are shown as vertical lines ($10^4$ repetitions) vs. the graph of the normal density $\mathrm{dnorm}(x,w,s)$, where $w=9.74$  and $s=3.57$ are empirical mean and standard deviation, respectively.}
\end{figure}

The proof of Theorem \ref{norasy} is split in several steps given in four subsections below. In Subsection 4.1 we decompose $V_{n,r}-\E\,V_{n,r}$ in the sum of martingale differences $\sum_{k=1}^n\,d_{n,k}$, with suitably defined (uniformly bounded) $d_{n,k}$'s. In Subsection 4.2 we show  that $\var\,V_{n,r}$ is of order $n^{r+1}\E\,p_{X_n}^r$. In Subsection 4.3 we show that $\var\,\sum_{k=1}^n\var\,(d_{n,k}|\cF_{n,k-1})$ is of order $o((n^{r+1}\E\,p_{X_n}^r)^2)$. The final part of the proof, which gathers all previous steps, is given in Subsection \ref{sec:finaltouch}.

\subsection{Martingale differences decomposition}
\label{sec:martindec}
\begin{lemma}
	The centered size of the overflow can be represented as $V_{n,r}-\E\,V_{n,r}=\sum_{k=1}^nd_{n,k}$, where the $d_{n,k}$ are martingale differences defined by
	\begin{equation}
	\label{eq:dnk}
	d_{n,k}=\sum_{j=k}^n\left(\E\,(Y_{n,j}|\cF_{n,k})-\E\,(Y_{n,j}|\cF_{n,k-1})\right).
	\end{equation}
\end{lemma}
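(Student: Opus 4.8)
The plan is to identify the stated $d_{n,k}$ as the successive increments of the Doob martingale $M_{n,k}:=\E(V_{n,r}\mid\cF_{n,k})$ and then to verify the two assertions of the lemma—the telescoping identity $\sum_k d_{n,k}=V_{n,r}-\E\,V_{n,r}$ and the martingale-difference property $\E(d_{n,k}\mid\cF_{n,k-1})=0$—by elementary manipulation of conditional expectations. The whole argument rests on two measurability observations, which I would record first. Since $Y_{n,j}$ is a function of $X_{n,1},\ldots,X_{n,j}$ (it involves $X_{n,j}$ together with the counts $N_{n,j}(m)$, and the latter are determined by $X_{n,1},\ldots,X_{n,j-1}$), each $Y_{n,j}$ is $\cF_{n,j}$-measurable, so $\E(Y_{n,j}\mid\cF_{n,j})=Y_{n,j}$; and adopting the convention that $\cF_{n,0}$ is the trivial $\sigma$-field, $\E(Y_{n,j}\mid\cF_{n,0})=\E\,Y_{n,j}$.

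For the telescoping identity I would substitute the definition \eqref{eq:dnk} and interchange the order of summation via $\sum_{k=1}^n\sum_{j=k}^n=\sum_{j=1}^n\sum_{k=1}^j$. For each fixed $j$ the inner sum over $k$ collapses,
$$\sum_{k=1}^j\big(\E(Y_{n,j}\mid\cF_{n,k})-\E(Y_{n,j}\mid\cF_{n,k-1})\big)=\E(Y_{n,j}\mid\cF_{n,j})-\E(Y_{n,j}\mid\cF_{n,0})=Y_{n,j}-\E\,Y_{n,j},$$
by the two facts above. Summing over $j$ then gives $\sum_{k=1}^n d_{n,k}=\sum_{j=1}^n(Y_{n,j}-\E\,Y_{n,j})=V_{n,r}-\E\,V_{n,r}$.

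For the martingale-difference property I would apply the tower property to each summand separately: for every $j\ge k$, $\E\big(\E(Y_{n,j}\mid\cF_{n,k})\mid\cF_{n,k-1}\big)=\E(Y_{n,j}\mid\cF_{n,k-1})$, so each term of $d_{n,k}$ has zero conditional expectation given $\cF_{n,k-1}$, whence $\E(d_{n,k}\mid\cF_{n,k-1})=0$. Square-integrability, needed for the $d_{n,k}$ to be bona fide martingale differences, is immediate from $0\le Y_{n,j}\le1$, which bounds every conditional expectation in sight. I do not expect a genuine obstacle; the only point deserving care is the bookkeeping in the summation swap, together with the observation that truncating the inner sum at $j\ge k$ (rather than $j\ge1$) is precisely what produces the lower telescoping endpoint $\cF_{n,0}$—the terms with $j<k$ contribute nothing, since such $Y_{n,j}$ is already $\cF_{n,k-1}$-measurable and hence cancels between the two conditional expectations.
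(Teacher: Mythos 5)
Your proposal is correct and follows essentially the same route as the paper: interchange the order of summation, telescope the inner sum using the $\cF_{n,j}$-measurability of $Y_{n,j}$ and the triviality of $\cF_{n,0}$, and obtain $\E(d_{n,k}\mid\cF_{n,k-1})=0$ from the tower property. You in fact supply slightly more detail than the paper (which dismisses the martingale-difference property as clear), and your index bookkeeping $\sum_{k=1}^n\sum_{j=k}^n=\sum_{j=1}^n\sum_{k=1}^j$ is the correct form of the swap.
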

\begin{proof}
	Clearly, $\E(d_{n,k}|\cF_{n,k-1})=0$. Further, noting that $\cF_{n,0}$ is the trivial $\sigma$-algebra,
	\begin{equation*}
	\begin{split}
	\sum_{k=1}^{n}d_{n,k}&=\sum_{j=1}^{n}\sum_{k=j}^{n}(\E\,(Y_{n,j}|\cF_{n,k})-\E\,(Y_{n,j}|\cF_{n,k-1}))\\
	&=\sum_{j=1}^{n}(\E\,(Y_{n,j}|\cF_{n,n})-\E\,Y_{n,j})= V_{n,r}-\E\, V_{n,r}.
	\end{split}
	\end{equation*}
\end{proof}
\begin{lemma} The martingales differences $d_{n,k}$ of \eqref{eq:dnk} are uniformly bounded and can be represented as
	\begin{equation}
	d_{n,k}=\E\,\Big(\tfrac{I_{\{X_{n,k}=X_n\}}-\pxn}{\pxn}\,I_{\{N_{n,k}(X_n)+N_{n,k+1}^{n+1}(X_n)\ge r\}}|\cF_{n,k}\Big).
	\end{equation}
\end{lemma}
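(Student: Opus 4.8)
The plan is to prove the representation first and then read off the uniform bound from it. My starting point is that $Y_{n,j}$ is $\cF_{n,j}$-measurable, so every term in \eqref{eq:dnk} with $j<k$ contributes $0$; extending the sum to all $j$ turns $d_{n,k}$ into the Doob increment $d_{n,k}=\E(V_{n,r}|\cF_{n,k})-\E(V_{n,r}|\cF_{n,k-1})$. Using \eqref{eq:form0} for each $Y_{n,j}$ and summing, $V_{n,r}=\E(H|\cF_{n,n})$ with
\[
H=\tfrac{1}{\pxn}\sum_{j=1}^n I_{\{X_{n,j}=X_n\}}\,I_{\{N_{n,j}(X_n)\ge r\}}=\tfrac{(N_{n,n+1}(X_n)-r)^+}{\pxn},
\]
the last equality because, for the fixed target box $X_n$, the balls that land in it while it already holds at least $r$ balls are exactly those arriving in positions $r+1,r+2,\dots$ of that box, so their number equals $(N_{n,n+1}(X_n)-r)^+$. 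By the tower property this gives $d_{n,k}=\E(H|\cF_{n,k})-\E(H|\cF_{n,k-1})$.

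Next I would isolate the $k$th ball. Put $\xi_k=I_{\{X_{n,k}=X_n\}}$ and $S_k=N_{n,k}(X_n)+N_{n,k+1}^{n+1}(X_n)$, the number of balls \emph{other than} the $k$th that chose box $X_n$, so that $N_{n,n+1}(X_n)=S_k+\xi_k$. The elementary identity
\[
(S_k+\xi_k-r)^+-(S_k-r)^+=\xi_k\,I_{\{S_k\ge r\}},
\]
valid since $\xi_k\in\{0,1\}$, yields $H=\tfrac{(S_k-r)^+}{\pxn}+\tfrac{\xi_k\,I_{\{S_k\ge r\}}}{\pxn}$. The first summand is a function of $X_n$ and of $\{X_{n,i}:i\ne k\}$ only, hence is independent of $X_{n,k}$; therefore its $\cF_{n,k}$- and $\cF_{n,k-1}$-conditional expectations coincide and cancel in $d_{n,k}$, leaving only the contribution of $\tfrac{\xi_k\,I_{\{S_k\ge r\}}}{\pxn}$.

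To finish the representation I would evaluate the surviving $\cF_{n,k-1}$-term by augmenting the conditioning with $X_n$ and the future balls $X_{n,k+1},\dots,X_{n,n}$: then $S_k$ and $\pxn$ are measurable while $X_{n,k}$ is still independent with conditional mean $\E(\xi_k|X_n)=\pxn$, so $\E(\tfrac{\xi_k\,I_{\{S_k\ge r\}}}{\pxn}|\cF_{n,k-1})=\E(I_{\{S_k\ge r\}}|\cF_{n,k-1})$, and since $S_k$ does not involve $X_{n,k}$ this equals $\E(I_{\{S_k\ge r\}}|\cF_{n,k})$. Substituting gives
\[
d_{n,k}=\E\Big(\tfrac{\xi_k\,I_{\{S_k\ge r\}}}{\pxn}\,\Big|\,\cF_{n,k}\Big)-\E\big(I_{\{S_k\ge r\}}\,\big|\,\cF_{n,k}\big)=\E\Big(\tfrac{(\xi_k-\pxn)\,I_{\{S_k\ge r\}}}{\pxn}\,\Big|\,\cF_{n,k}\Big),
\]
which is the asserted formula. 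I expect this paragraph to be the main obstacle: it is entirely a matter of bookkeeping which of $X_n$, the past balls, the future balls and $X_{n,k}$ are measurable or independent under each conditioning, and it is here that a stray factor $\pxn$ or a sign is easiest to mislay.

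For the uniform bound I would read off the two conditional expectations in the final display. The subtracted term is $\P(S_k\ge r|\cF_{n,k})\in[0,1]$. In the other term, on $\{\xi_k=1\}$ one has $X_n=X_{n,k}$, so $\pxn=p_{n,X_{n,k}}$ and $S_k=N_{n,k}(X_{n,k})+N_{n,k+1}^{n+1}(X_{n,k})$; the conditional probability $\E(\xi_k|\cF_{n,k})=p_{n,X_{n,k}}$ that the freshly drawn $X_n$ hits the already revealed box $X_{n,k}$ cancels the factor $\tfrac1{\pxn}$, and (using independence of $X_n$ from the future balls) this term equals $\P(N_{n,k}(X_{n,k})+N_{n,k+1}^{n+1}(X_{n,k})\ge r|\cF_{n,k})\in[0,1]$. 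Hence $d_{n,k}$ is a difference of two conditional probabilities, so $|d_{n,k}|\le 1$, and in particular the $d_{n,k}$ are uniformly bounded.
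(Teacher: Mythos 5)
Your argument is correct, and it takes a genuinely different route from the paper's. The paper works term by term: for each $j>k$ it decomposes the event $\{N_{n,j}(X_n)\ge r\}$ according to whether the $k$th ball is the one that tips box $X_n$ over the threshold, computes $\E(Y_{n,j}|\cF_{n,k})-\E(Y_{n,j}|\cF_{n,k-1})$ for each $j$ separately, and then collapses the sum over $j$ via the telescoping identity $\sum_{j=k+1}^n I_{\{N_{n,k}(X_n)+N_{n,k+1}^j(X_n)=r-1\}}I_{\{X_{n,j}=X_n\}}=I_{\{N_{n,k}(X_n)+N_{n,k+1}^{n+1}(X_n)\ge r\}}$ on $\{N_{n,k}(X_n)<r\}$. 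You instead first aggregate: the closed form $V_{n,r}=\E\big((N_{n,n+1}(X_n)-r)^+/\pxn\,\big|\,\cF_{n,n}\big)$ (whose integrability is guaranteed by the factor $I_{\{X_{n,j}=X_n\}}$, exactly as in \eqref{eq:form0}) turns $d_{n,k}$ into a Doob increment of a single random variable $H$, and the discrete-derivative identity $(S_k+\xi_k-r)^+-(S_k-r)^+=\xi_k I_{\{S_k\ge r\}}$ isolates the $k$th ball in one stroke; the remaining bookkeeping of which variables are measurable or independent under each conditioning is carried out correctly. Your route avoids the event-algebra and the telescoping sum entirely and is arguably cleaner; as a bonus, exhibiting $d_{n,k}$ as a difference of two conditional probabilities gives the sharper bound $|d_{n,k}|\le 1$, whereas the paper's direct estimate only yields $|d_{n,k}|\le 2$ (either suffices for uniform boundedness). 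The paper's term-by-term computation, on the other hand, produces the intermediate quantity $e_{n,k}$ and the explicit split $d_{n,k}=Y_{n,k}-\E(Y_{n,k}|\cF_{n,k-1})+e_{n,k}$, which is not needed later, so nothing is lost by your shortcut.
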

\begin{proof}
	Let $n,r\in\N, j> k$ and note that $N_{n,j}(X_n)=N_{n,k}(X_n)+I_{\{X_{n,k}=X_n\}}+N_{n,k+1}^j(X_n)$. For simplicity let $U_j=N_{n,k}(X_n)+N_{n,k+1}^j(X_n), V=N_{n,k}(X_n)$ and $I=I_{\{X_{n,k}=X_n\}}$. Then
	\begin{equation*}
	\begin{split}
	\{U_j+I\ge r\}&=\{U_j= r-1,I=1 \}\cup\{U_j\ge r,I=1 \}\cup \{U_j\ge r,I=0 \}\\
	&=\{U_j= r-1,I=1 \}\cup\{U_j\ge r\}.
	\end{split}
	\end{equation*}
	Hence, noting that $\{V\ge r \}=\{N_{n,k}(X_n)\ge r \}\subseteq \{N_{n,j}(X_n)\ge r \}=\{U_j+I\ge r\}$, we have
	\begin{equation*}
	\{N_{n,j}(X_n)\ge r \}=\{V\ge r \}\cup \{U_j\ge r, V<r\}\cup\{U_j= r-1,I=1, V<r\}.
	\end{equation*}
	Consequently, from \eqref{eq:form}, we can write
	\begin{equation*}
	\begin{split}
	\E(Y_{n,j}|\cF_{n,k}) &=\E(I_{\{V\ge r\}}\Big|\cF_{n,k})+\E(I_{\{U_j\ge r,V<r\}}|\cF_{n,k})+\E(I_{\{U_j= r-1,V<r,I=1\}}|\cF_{n,k})\\
	&=\E(I_{\{V\ge r\}}|\cF_{n,k-1})+\E(I_{\{U_j\ge r,V<r\}}|\cF_{n,k-1})+\E(I_{\{U_j= r-1,V<r\}}I|\cF_{n,k})
	\end{split}
	\end{equation*}
	and, similarly,
	\begin{equation*}
	\begin{split}
	\E(Y_{n,j}|\cF_{n,k}) &=\E(I_{\{V\ge r\}}|\cF_{n,k-1})+\E(I_{\{U_j\ge r,V<r\}}|\cF_{n,k-1})+\E(I_{\{U_j= r-1,V<r\}}I|\cF_{n,k-1}).
	\end{split}
	\end{equation*}
	Also, note that
	\begin{equation*}
	\E(I_{\{U_j= r-1,V<r\}}I|\cF_{n,k-1})=\E(I_{\{U_j= r-1,V<r\}}\pxn |\cF_{n,k}).
	\end{equation*}	
	Therefore, for $j>k$,
	\begin{equation*}
	\E(Y_{n,j}|\cF_{n,k})-\E(Y_{n,j}|\cF_{n,k-1})=\E\left(I_{\{U_j= r-1,V<r\}}(I-\pxn) \Big|\cF_{n,k}\right).
	\end{equation*}
	Thus
	\begin{equation*}
	\begin{split}
	e_{n,k}:&=\sum_{j=k+1}^{n}(\E(Y_{n,j}|\cF_{n,k})-\E(Y_{n,j}|\cF_{n,k-1}))\\
	&=\E\Big((I-\pxn) \sum_{j=k+1}^{n}I_{\{U_j= r-1,V<r\}}\Big|\cF_{n,k}\Big).
	\end{split}
	\end{equation*}
	Observe that, for $j>k$, $\E\,\Big(\tfrac{I_{\{X_{n,j}=X_n\}}}{p_{X_n}}|X_n, \cF_{n,j-1}\Big)=1$. Then
	\begin{equation*}
	\begin{split}
	e_{n,k}&=\E\Big((I-\pxn)I_{\{V<r\}} \sum_{j=k+1}^{n}I_{\{U_j= r-1\}}\E\,\Big(\tfrac{I_{\{X_{n,j}=X_n\}}}{p_{X_n}}\Big|X_n, \cF_{n,j-1}\Big)\Big|\cF_{n,k}\Big)\\
	&=\E\Big(\E\,\Big(\tfrac{I-\pxn}{\pxn}I_{\{V<r\}} \sum_{j=k+1}^{n}I_{\{U_j= r-1\}}I_{\{X_{n,j}=X_n\}}\Big|X_n, \cF_{n,j-1}\Big)\Big|\cF_{n,k}\Big)\\
	&=\E\Big(\tfrac{I-\pxn}{\pxn}I_{\{V<r\}} \sum_{j=k+1}^{n}I_{\{U_j= r-1\}}I_{\{X_{n,j}=X_n\}}\Big|\cF_{n,k}\Big).
	\end{split}
	\end{equation*}
	Note that $\sum_{j=k+1}^{n}I_{\{U_j= r-1\}}I_{\{X_{n,j}=X_n\}}=I_{\{U_j=r-1, U_{j+1}=r,\text{ for some } j\in\{k+1,\ldots,n\}\}}$, is equal to $I_{\{U_{n+1}\ge r\}}$ on the event $\{N_{n,k}(X_n)<r\}$. That is, using the original notation,
	\begin{equation*}
	\sum_{j=k+1}^n\,I_{\{N_{n,k}(X_n)+N_{n,k+1}^j(X_n)=r-1\}}\,I_{\{X_{n,j}=X_n\}}=I_{\{N_{n,k}(X_n)+N_{n,k+1}^{n+1}(X_n)\ge r\}}
	\end{equation*}
	on the event $\{N_{n,k}(X_n)<r\}$ and so,
	\begin{equation*}
	e_{n,k}=\E\left(\tfrac{I-\pxn}{\pxn}I_{\{N_{n,k}(X_n)<r\}} I_{\{N_{n,k}(X_n)+N_{n,k+1}^{n+1}(X_n)\ge r\}}\Big|\cF_{n,k}\right).
	\end{equation*}
	Finally, since
	\begin{equation*}
	Y_{n,k}-\E\,(Y_{n,k}|\cF_{n,k-1})=\E\Big(\,\tfrac{I_{\{X_{n,k}=X_n\}}-\pxn}{\pxn}I_{\{N_{n,k}(X_n)\ge r\}}\,\Big|\cF_{n,k}\Big)
	\end{equation*}
	we conclude that
	\begin{equation}
	\begin{split}
	\label{eq:dnkk}
	d_{n,k}&=Y_{n,k}-\E\,(Y_{n,k}|\cF_{n,k-1})+e_{n,k}\\
	=&\E\Big(\tfrac{I_{\{X_{n,k}=X_n\}}-\pxn}{\pxn}\Big(I_{\{N_{n,k}(X_n)\ge r\}}+I_{\{N_{n,k}(X_n)<r\}} I_{\{N_{n,k}(X_n)+N_{n,k+1}^{n+1}(X_n)\ge r\}}\Big)\Big|\cF_{n,k}\Big)\\
	=&\E\Big(\,\tfrac{I_{\{X_{n,k}=X_n\}}-\pxn}{\pxn}I_{\{N_{n,k}(X_n)+N_{n,k+1}^{n+1}(X_n)\ge r\}}\,\Big|\cF_{n,k}\Big).
	\end{split}
	\end{equation}
	For the boundedness of $d_{n,k}$ note that
	\begin{equation*}
	|d_{n,k}|\le \E\Big(\,\Big|\tfrac{I_{\{X_{n,k}=X_n\}}-\pxn}{\pxn}\Big||\cF_{n,k}\Big)\le \sum_{m\in M_n}|I_{\{X_{n,k}=m\}}-p_{n,m}|\le2.
	\end{equation*}
\end{proof}
\subsection{Asymptotic variance}
\begin{lemma}
	\label{lem:bd4var}
	Assume that $np_n^*\to\lambda\ge 0$ and that $n^{r+1}\E\,p_{X_n}^r\to\infty$. Then
	\begin{equation}
	\frac{e^{-2\lambda}}{(r+1)!}\le\liminf_{n\to \infty}\, \frac{\var\,V_{n,r}}{n^{r+1}\,\E\,p_{X_n}^r}\le\limsup_{n\to\infty}\, \frac{\var\,V_{n,r}}{n^{r+1}\,\E\,p_{X_n}^r}\le \frac{1}{r!}.
	\end{equation}
\end{lemma}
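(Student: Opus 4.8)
The plan is to exploit the martingale-difference representation $V_{n,r}-\E\,V_{n,r}=\sum_{k=1}^n d_{n,k}$ of Subsection~\ref{sec:martindec}. Since the $d_{n,k}$ are orthogonal, $\var\,V_{n,r}=\sum_{k=1}^n\E\,d_{n,k}^2$, so everything reduces to two-sided estimates of $\E\,d_{n,k}^2=\E\big(\E(d_{n,k}^2|\cF_{n,k-1})\big)$. Expanding the expectation over the independent copy $X_n$ in the representation \eqref{eq:dnkk}, one writes $d_{n,k}=\sum_{m\in M_n}(I_{\{X_{n,k}=m\}}-p_{n,m})\,g_{n,k}(m)$, where $g_{n,k}(m)=\P(N_{n,k}(m)+N_{n,k+1}^{n+1}(m)\ge r\,|\,\cF_{n,k})$ is $\cF_{n,k-1}$-measurable (it is a function of $N_{n,k}(m)$ alone, the future increment being independent of $\cF_{n,k}$). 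As only $X_{n,k}$ is fresh given $\cF_{n,k-1}$, we have $d_{n,k}=g_{n,k}(X_{n,k})-\E(g_{n,k}(X_{n,k})|\cF_{n,k-1})$, hence the exact identity
\[
\E(d_{n,k}^2|\cF_{n,k-1})=\var\big(g_{n,k}(X_{n,k})\,\big|\,\cF_{n,k-1}\big)=\sum_{m}p_{n,m}g_{n,k}(m)^2-\Big(\sum_m p_{n,m}g_{n,k}(m)\Big)^2 .
\]
All estimates are read off from this formula; note also that $\E\,g_{n,k}(m)=\P(\Bin(n-1,p_{n,m})\ge r)$, since $N_{n,k}(m)+N_{n,k+1}^{n+1}(m)$ counts the balls other than the $k$-th that land in box $m$.

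For the upper bound I would use $0\le g_{n,k}\le1$, so $g_{n,k}^2\le g_{n,k}$ and the conditional variance is at most $\sum_m p_{n,m}g_{n,k}(m)$. Taking expectations and applying Lemma~\ref{lem:bin_bdd},
\[
\E\,d_{n,k}^2\le\sum_m p_{n,m}\,\P(\Bin(n-1,p_{n,m})\ge r)\le\frac{n^r}{r!}\sum_m p_{n,m}^{r+1}=\frac{n^r}{r!}\,\E\,p_{X_n}^r .
\]
Summing over the $n$ values of $k$ gives $\var\,V_{n,r}\le\frac{n^{r+1}}{r!}\E\,p_{X_n}^r$, hence $\limsup\le 1/r!$.

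The lower bound is the delicate part. Writing the conditional variance as $\tfrac12\sum_{m,m'}p_{n,m}p_{n,m'}(g_{n,k}(m)-g_{n,k}(m'))^2$, I would retain only the cross terms between boxes already full after $k-1$ balls, $F=\{m:N_{n,k}(m)\ge r\}$ (where $g_{n,k}\equiv1$), and boxes still empty, $E=\{m':N_{n,k}(m')=0\}$. For $m'\in E$ one has $g_{n,k}(m')=\P(\Bin(n-k,p_{n,m'})\ge r)$, so $1-g_{n,k}(m')\ge(1-p_{n,m'})^{n-k}\ge(1-p_n^*)^{n-k}$, whence $\E(d_{n,k}^2|\cF_{n,k-1})\ge(1-p_n^*)^{2(n-k)}\,\pi_F\pi_E$ with $\pi_F=\sum_{m\in F}p_{n,m}$, $\pi_E=\sum_{m'\in E}p_{n,m'}$. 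It then remains to bound $\E[\pi_F\pi_E]$ from below, and this is exactly where negative association works against us, since it only yields an \emph{upper} bound on $\P(N_{n,k}(m)\ge r,\,N_{n,k}(m')=0)$. Instead I would condition on $\{N_{n,k}(m')=0\}$: the first $k-1$ balls then fall on $M_n\setminus\{m'\}$ and $N_{n,k}(m)$ becomes $\Bin(k-1,p_{n,m}/(1-p_{n,m'}))$, which dominates $\Bin(k-1,p_{n,m})$, giving
\[
\P(N_{n,k}(m)\ge r,\,N_{n,k}(m')=0)\ge(1-p_{n,m'})^{k-1}\,\tbinom{k-1}{r}p_{n,m}^r(1-p_{n,m})^{k-1-r}.
\]
Summing over distinct $m,m'$ and collecting all factors $(1-p_n^*)$, the total exponent is $2(n-k)+2(k-1)-r=2n-2-r$, independent of $k$, so $(1-p_n^*)^{2n-2-r}\to e^{-2\lambda}$; the remaining sum is $\sum_{k=1}^n\binom{k-1}{r}=\binom{n}{r+1}\sim n^{r+1}/(r+1)!$, which yields $\liminf\ge e^{-2\lambda}/(r+1)!$ once the $O(p_n^*)$ correction from excluding $m=m'$ is checked to be negligible.

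I expect the main obstacle to be precisely this last step: extracting a usable lower bound on $\E[\pi_F\pi_E]$ despite the full-box and empty-box indicators being negatively correlated, and then bookkeeping the powers of $(1-p_n^*)$ so that exactly $e^{-2\lambda}$—and not a worse exponential—survives. The conditioning/stochastic-domination device is what neutralizes the unfavorable correlation, and the fact that the $(1-p_n^*)$-exponents sum to the $k$-independent value $2n-2-r$ is what produces the clean constant $e^{-2\lambda}/(r+1)!$.
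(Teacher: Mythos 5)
Your proof is correct, and although it shares the paper's skeleton---the martingale-difference decomposition of Subsection~\ref{sec:martindec}, orthogonality giving $\var V_{n,r}=\sum_{k}\E\,d_{n,k}^2$, and an upper bound via $T_{n,k}\le 1$ plus Lemma~\ref{lem:bin_bdd}---your lower bound takes a genuinely different route. The paper introduces two independent copies $X_n,Y_n$, writes $\E(d_{n,k}^2|\cF_{n,k-1})$ as in \eqref{eq:vardnk1}, uses the NOD inequality \eqref{eq:covH} to upper-bound the cross term $\E\,I_{\{X_n\ne Y_n\}}T_{n,k}(X_n)T_{n,k}(Y_n)$, reduces the problem to lower-bounding $\E\,\var(T_{n,k}(X_n)|X_n)$, and then expands $T_{n,k}(x)=\sum_j P_j(x)I_j(x)$ into nonnegatively correlated indicators, keeping only the $j=0$ term to get $\binom{k-1}{r}p_x^r(1-p_n^*)^{2n}$. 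You instead start from the exact one-step identity $\E(d_{n,k}^2|\cF_{n,k-1})=\var\bigl(g_{n,k}(X_{n,k})\,\big|\,\cF_{n,k-1}\bigr)$ with $g_{n,k}=T_{n,k}$ (which agrees with \eqref{eq:vardnk1} after expanding over $m$), rewrite it as $\tfrac12\sum_{m,m'}p_{n,m}p_{n,m'}(g_{n,k}(m)-g_{n,k}(m'))^2$, keep only full-box/empty-box pairs, and replace the paper's NOD step by a conditioning/stochastic-domination lower bound on $\P(N_{n,k}(m)\ge r,\,N_{n,k}(m')=0)$: conditionally on $\{N_{n,k}(m')=0\}$ the count $N_{n,k}(m)$ is $\Bin(k-1,p_{n,m}/(1-p_{n,m'}))$, which dominates $\Bin(k-1,p_{n,m})$. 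Both arguments produce the same constants $e^{-2\lambda}/(r+1)!$ and $1/r!$; your exponent bookkeeping ($2(n-k)+2(k-1)-r=2n-2-r$, independent of $k$) and the absorption of the diagonal correction into a factor $1-p_n^*\to1$ are sound. What your version buys is a more transparent starting identity (no auxiliary copies $X_n,Y_n$ are needed for the variance of a single $d_{n,k}$) and an explicit probabilistic mechanism in place of the indicator-decomposition trick; what the paper's version buys is that its intermediate objects ($T_{n,k}$, the NOD bounds) are reused almost verbatim in the proof of Lemma~\ref{lem:vcv}, whereas your pairwise-difference formula would not carry over there as directly.
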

\begin{proof}
	Let $p_x=p_{n,x}$, $q_x=1-p_x$ and
	\begin{equation}
	\label{eq:Tnk}
	T_{n,k}(x)=\sum_{i=r-N_{n,k}(x)}^{n-k}\,\binom{n-k}{i}\,p_{x}^i\,q_{x}^{n-k-i}.
	\end{equation}
	Then
	\begin{equation*}
	\begin{split}
	\E\Big(\,\tfrac{I_{\{X_{n,k}=X_n\}}-\pxn}{\pxn}I_{\{N_{n,k}(X_n)+N_{n,k+1}^{n+1}(X_n)\ge r\}}&\Big|X_n,\cF_{n,k}\Big)\\
	&=\E\Big(\,\tfrac{I_{\{X_{n,k}=X_n\}}-\pxn}{\pxn}T_{n,k}(X_n)\,\Big|X_n,\cF_{n,k}\Big)
	\end{split}
	\end{equation*}
	and so
	\begin{equation*}
	d_{n,k}=\E\Big(\,\tfrac{I_{\{X_{n,k}=X_n\}}-\pxn}{\pxn}T_{n,k}(X_n)\,\Big|\cF_{n,k}\Big).
	\end{equation*}
	Also, recalling that $X_n,Y_n, X_{n,1},\ldots,X_{n,n}$ are iid,
	\begin{equation*}
	\begin{split}
	d_{n,k}^2&=\E\Big(\,\tfrac{I_{\{X_{n,k}=X_n\}}-\pxn}{\pxn}T_{n,k}(X_n)\,\Big|\cF_{n,k}\Big)\E\left(\,\tfrac{I_{\{X_{n,k}=Y_n\}}-\pyn}{\pyn}T_{n,k}(Y_n)\,\Big|\cF_{n,k}\right)\\
	&=\E\Big(\,\tfrac{I_{\{X_{n,k}=X_n\}}-\pxn}{\pxn}T_{n,k}(X_n)\,\tfrac{I_{\{X_{n,k}=Y_n\}}-\pyn}{\pyn}T_{n,k}(Y_n)\,\Big|\cF_{n,k}\Big),
	\end{split}
	\end{equation*}
	where the second equality above follows from the conditional independence of $\tfrac{I_{\{X_{n,k}=X_n\}}-\pxn}{\pxn}T_{n,k}(X_n)$ and $\tfrac{I_{\{X_{n,k}=Y_n\}}-\pyn}{\pyn}T_{n,k}(Y_n)$, given $\cF_{n,k}$.
	
	In what follows we compute $\E(d_{n,k}^2|\cF_{n,k-1})$ by considering the cases $X_n=Y_n$ and $X_n\not=Y_n$. We get
	\begin{equation*}
	\begin{split}
	\E(d_{n,k}^2&|\cF_{n,k-1})=\E\Big(\,I_{\{X_n=Y_n\}}\Big(\tfrac{I_{\{X_{n,k}=X_n\}}-\pxn}{\pxn}\Big)^2T_{n,k}^2(X_n)\,\Big|\cF_{n,k-1}\Big)\\
	&+\E\Big(\,I_{\{X_n\not=Y_n\}}\tfrac{\left(I_{\{X_{n,k}=X_n\}}-\pxn\right)\left(I_{\{X_{n,k}=Y_n\}}-\pyn\right)}{\pxn\pyn}T_{n,k}(X_n)T_{n,k}(Y_n)\,\Big|\cF_{n,k-1}\Big)\\
	&=\E\Big(I_{\{X_n=Y_n\}}\tfrac{1-\pxn}{\pxn}T_{n,k}^2(X_n)\Big|\cF_{n,k-1}\Big)-\E\Big(I_{\{X_n\not=Y_n\}}T_{n,k}(X_n)T_{n,k}(Y_n)\Big|\cF_{n,k-1}\Big),
	\end{split}
	\end{equation*}
	where the second equality above follows from conditioning inside both expectations above, with respect to $X_n, Y_n,\cF_{n,k-1}$. Finally, integrating out $Y_n$ in the first expectation, we obtain
	\begin{equation}
	\label{eq:vardnk1}
	\E(d_{n,k}^2|\cF_{n,k-1})=\E\Big(\,\qxn T_{n,k}^2(X_n)\,\Big|\cF_{n,k-1}\Big)-\E\Big(\,I_{\{X_n\not=Y_n\}}T_{n,k}(X_n)T_{n,k}(Y_n)\,\Big|\cF_{n,k-1}\Big)
	\end{equation}
	and, consequently,
	\begin{equation}
	\label{eq:vardnk}
	\var\, d_{n,k}=\E\, d_{n,k}^2=\E\,\qxn T_{n,k}^2(X_n)-\E\,I_{\{X_n\not=Y_n\}}T_{n,k}(X_n)T_{n,k}(Y_n).
	\end{equation}
	For the upper bound of the variance note that $0<T_{n,k}(X_n)\le 1$ and thus \eqref{eq:vardnk} implies
	\begin{equation*}
	\var\,d_{n,k}\le\E d_{n,k}^2\le\E\,T_{n,k}(X_n).
	\end{equation*}
	Also,
	\begin{equation*}
	\E\,( T_{n,k}(X_n)\,|X_n,\cF_{n,k})=\P\,( N_{n,k}(X_n)+N_{n,k+1}^{n+1}(X_n)\ge r\,|X_n,\cF_{n,k})
	\end{equation*}
	and so,
	\begin{equation}
	\label{eq:eTkn}
	\begin{split}
	\E\,( T_{n,k}(X_n)\,|X_n)&=\P\,( N_{n,k}(X_n)+N_{n,k+1}^{n+1}(X_n)\ge r\,|X_n)\\
	&\le \P( N_{n,n+1}(X_n)\ge r\,|X_n).
	\end{split}
	\end{equation}
	Now, recalling that $N_{n,n+1}(m)$ has distribution
	$\Bin(n, p_{n,m})$, for $m\in M_n$, and using \eqref{eq:binineq}, the rhs of \eqref{eq:eTkn} is bounded by $n^rp^r_{X_n}/r!$. Last, taking expectations, we obtain $\var\,d_{n,k}\le n^r\E\, p^r_{X_n}/r!$ and, consequently,
	\begin{equation}
	\label{eq:ub}
	\var\,V_{n,r}= \sum_{k=1}^n\,\var\,d_{n,k}\le \frac{n^{r+1}\E\,p_{X_n}^r}{r!}.
	\end{equation}
	Now, to bound the variance of $d_{n,k}$ from below,  we first find an upper bound for the last term (with minus sign) in display \eqref{eq:vardnk}. To that end note that $T_{n,k}(x), x\in M_m$, defined in \eqref{eq:Tnk}, can be written as
	\begin{equation*}
	T_{n,k}(x)=\P(B_{n-k}(x)+N_{n,k}(x)\ge r|\cF_{n,k}),
	\end{equation*}
	where $B_{n-k}(x)$ is $\Bin(n-k,p_x)$, independent of $X_n,Y_n,\cF_{n,n}$, so
	\begin{equation*}
	T_{n,k}(X_n)=\P(B_{n-k}(X_n)+N_{n,k}(X_n)\ge r|X_n,\cF_{n,k}),
	\end{equation*}
	and
	\begin{equation}
	\label{eq:Tnkx}
	\E(T_{n,k}(x)|X_n)=\P(B_{n-k}(x)+N_{n,k}(x)\ge r|X_n).
	\end{equation}
	Furthermore, for $y\ne x$, let $B_{n-k}(y)$   be     $\Bin(n-k,p_y)$, independent of $X_n,Y_n,\cF_{n,n}$ and independent of $B_{n-k}(x)$. Then
	\begin{equation*}
	J_{n,k}:=\E\left(\,I_{\{X_n\not=Y_n\}}T_{n,k}(X_n)T_{n,k}(Y_n)\,|X_n,Y_n,\cF_{n,k}\right)
	\end{equation*}
	can be written as
	\begin{equation*}
	J_{n,k}=\P\left(X_n\not=Y_n, B_{n,k}(X_n)+N_{n,k}(X_n)\ge r, B_{n,k}(Y_n)+N_{n,k}(Y_n)\ge r\,|X_n,Y_n,\cF_{n,k}\right)
	\end{equation*}
	and so,
	\begin{equation*}
	\E(J_{n,k}|X_n,Y_n)=\P(X_n\not=Y_n, B_{n,k}(X_n)+N_{n,k}(X_n)\ge r, B_{n,k}(Y_n)+N_{n,k}(Y_n)\ge r|X_n,Y_n).
	\end{equation*}
	Then, since, conditionally on $X_n, Y_n$,  $(N_{n,k}(X_n), N_{n,k}(Y_n))$ is   $\mathrm{Mn}_2(k-1,p_{X_n},p_{Y_n})$ and because of the NOD property, we have $\E(J_{n,k}|X_n,Y_n)$
	\begin{equation*}
	\begin{split}
	&\le I_{\{X_n\not=Y_n\}}\P\left(B_{n,k}(X_n)+N_{n,k}(X_n)\ge r|X_n,Y_n)\P(B_{n,k}(Y_n)+N_{n,k}(Y_n)\ge r\,|X_n,Y_n\right)\\
	&= I_{\{X_n\not=Y_n\}}\P\left(B_{n,k}(X_n)+N_{n,k}(X_n)\ge r|X_n)\P(B_{n,k}(Y_n)+N_{n,k}(Y_n)\ge r\,|Y_n\right)\\
	&= I_{\{X_n\not=Y_n\}}\E\left(T_{n,k}(X_n)|X_n)\E(T_{n,k}(Y_n)|Y_n\right)\\
	&= \E(T_{n,k}(X_n)|X_n)\E(T_{n,k}(Y_n)|Y_n)-I_{\{X_n=Y_n\}}(\E(T_{n,k}(X_n)|X_n))^2,
	\end{split}
	\end{equation*}
	where the second equality follows from the NOD property and the third from \eqref{eq:Tnkx}. Finally, taking expectations and using the independence of $X_n$ and $Y_n$, we get
	\begin{equation*}
	\E J_{n,k}\le(\E T_{n,k}(X_n))^2-\E \pxn(\E(T_{n,k}(X_n)|X_n))^2.
	\end{equation*}
	Replacing the rightmost expectation in display \eqref{eq:vardnk} by the bound above we have
	\begin{eqnarray*}
		\var\,d_{n,k}&\ge& \E\,T^2_{n,k}(X_n)-\E\,\pxn T_{n,k}^2(X_n)-\left(\E\,T_{n,k}(X_n)\right)^2+\E\,\pxn(\E\,T_{n,k}(X_n)|X_n)^2\\&=&
		\var\,T_{n,k}(X_n)-\E\,\pxn\var(T_{n,k}(X_n)|X_n)\\&\ge&
		\E\,\var\,(T_{n,k}(X_n)|X_n)-\E\,\pxn\var(T_{n,k}(X_n)|X_n).
	\end{eqnarray*}
	Note that
	\begin{equation*}
	\begin{split}
	\E\,\pxn\var(T_{n,k}(X_n)|X_n)&\le \E\,\pxn\E\,(T^2_{n,k}(X_n)|X_n)\\
	&\le \E\,\pxn T_{n,k}(X_n)
	\le\frac{n^{r}\E\,p_{X_n}^{r+1}}{r!}
	\le \frac{np_n^*}{nr!}\,n^{r}\E\,p_{X_n}^r.
	\end{split}
	\end{equation*}
	Hence, since $np_n^*\to\lambda$,
	\begin{equation*}
	\sum_{k=1}^n\,\var\,d_{n,k}\ge \sum_{k=1}^n\,\E\,\var(T_{n,k}(X_n)|X_n)+o(n^{r+1}\E\,p_{X_n}^r).
	\end{equation*}
	Finally note that $T_{n,k}(x)$, as defined in \eqref{eq:Tnk}, can be written in the form
	\begin{equation*}
	T_{n,k}(x)=\sum_{j=0}^{\infty}P_{j}(x)\,I_j(x),\; x\in M_n,
	\end{equation*}
	where $P_{j}(x)=\binom{n-k}{j}\,p_{x}^j\,q_{x}^{n-k-j}$ and $I_j(x)=I_{\{N_{n,k}(x)\ge r-j\}}$.
	Therefore,
	\begin{equation*}
	\var\,T_{n,k}(x)=\sum_{j=0}^{\infty}P_j^2(x)\var\,I_j(x)+2\sum_{j_1<j_2}P_{j_1}(x)P_{j_2}(x)\cov\left(I_{j_1}(x),I_{j_2}(x)\right).
	\end{equation*}
	Since $I_{j_1}(x)\le I_{j_2}(x)$ it follows that the double sum above is non-negative and so,
	\begin{equation*}
	\begin{split}
	\var\,T_{n,k}(x)&\ge\sum_{j=0}^{\infty}P_j^2(x)\var\,I_j(x)\\
	&\ge P_0^2(x)\var\,I_0(x)\\
	&=(1-p_x)^{2(n-k)}\P(N_{n,k}(x)\ge r)\P(N_{n,k}(x)< r)\\
	&\ge (1-p_x)^{2(n-k)}\P(N_{n,k}(x)= r)\P(N_{n,k}(x)=0)\\
	&=\binom{k-1}{r}p_x^r(1-p_x)^{2n-r-2}\ge \binom{k-1}{r}p_x^r(1-p_n^*)^{2n}.
	\end{split}
	\end{equation*}
	Consequently,
	\begin{equation*}
	\var (T_{n,k}(X_n)|X_n)\ge \binom{k-1}{r}p^{r}_{X_n}(1-p_n^*)^{2n}
	\end{equation*}
	and finally, since $np_n^*\to\lambda$,
	\begin{equation*}
	\sum_{k=1}^n\,\var\,d_{n,k}\ge (1-p_n^*)^{2n}\,\E\,p_{X_n}^r \sum_{k=1}^n\,\binom{k-1}{r}+o(n^{r+1}\E\,p_{X_n}^r)\sim \tfrac{e^{-2\lambda}}{(r+1)!}\,n^{r+1}\E\,p_{X_n}^r.
	\end{equation*}
\end{proof}
\subsection{Variance of the sum of conditional variances}
\begin{lemma}
	\label{lem:vcv}
	Under the hypotheses of Lemma \ref{lem:bd4var}
	\begin{equation}
	\label{eq:varconvar}
	W_n:=\var\,\sum_{k=1}^n\,\E\,(d_{n,k}^2|\cF_{n,k-1})=o((n^{r+1}\E\,p_{X_n}^r)^2).
	\end{equation}
\end{lemma}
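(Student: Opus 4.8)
The plan is to convert the conditional second moment already computed in \eqref{eq:vardnk1} into a weighted variance over the boxes. Writing $p_m=p_{n,m}$, $q_m=1-p_m$ and $T_k(m)=T_{n,k}(m)$, and carrying out the averages over $X_n$ and $Y_n$ exactly as in the auxiliary-variable calculus of Section~\ref{neg_dep_rem} (recall that $X_n$ is independent of $\cF_{n,k-1}$ while $T_{n,k}(m)$ is $\cF_{n,k-1}$-measurable), identity \eqref{eq:vardnk1} becomes
\[
g_{n,k}:=\E(d_{n,k}^2\,|\,\cF_{n,k-1})=\var(T_{n,k}(X_n)\,|\,\cF_{n,k-1})=\sum_{i,j\in M_n}w_{ij}\,T_k(i)\,T_k(j),\qquad w_{ij}=p_i\delta_{ij}-p_ip_j.
\]
Two features will drive the proof: each $T_k(m)$ is a nondecreasing function of $N_{n,k}(m)$, hence of the whole configuration $(N_{n,k}(m))_{m\in M_n}$; and the weight matrix is centered, $\sum_j w_{ij}=\sum_i w_{ij}=0$ (because $\sum_m p_m=1$).

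First I would expand
\[
W_n=\sum_{k,l=1}^n\cov(g_{n,k},g_{n,l})=\sum_{k,l=1}^n\ \sum_{i,j}\sum_{a,b}w_{ij}w_{ab}\,\cov\!\big(T_k(i)T_k(j),\,T_l(a)T_l(b)\big)
\]
and split the inner sum according to whether $\{i,j\}\cap\{a,b\}=\emptyset$ (the \emph{disjoint} part) or the two index pairs share a box (the \emph{coinciding} part). In the disjoint part the two factors are nondecreasing functions of the counts on four distinct boxes, a configuration that is NA by properties $P_4$ and $P_6$ recalled in Section~\ref{neg_dep_rem} (this is exactly what underlies \eqref{eq:PA2}--\eqref{eq:covH}); the centering $\sum_j w_{ij}=0$ removes the mean-field contribution of this part, and NA controls the sign and size of what remains. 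The coinciding part is where the non-trivial bound originates.

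For the coinciding part a shared box $m$ is weighted by $p_m$ on each side, so these terms carry a factor $p_m^2$. Bounding the unshared factors by $T_k,T_l\le1$ and the covariance by the corresponding cross moment (legitimate since all the $T$'s are nonnegative), then reducing the product at the shared box to a single $T$ and invoking \eqref{eq:binineq} through $\E\,T_{n,k}(m)\le\P(\Bin(n,p_m)\ge r)\le (np_m)^r/r!$, while the remaining free box-indices contribute weights summing to at most $1$, the coinciding contribution is seen to be
\[
W_n^{\mathrm{coinc}}\ \lesssim\ \sum_{k,l=1}^n\ \sum_{m\in M_n}p_m^2\,\frac{(np_m)^r}{r!}\ =\ \frac{n^{r+2}}{r!}\,\E\,p_{X_n}^{r+1}\ \le\ (np_n^*)\,\frac{n^{r+1}\,\E\,p_{X_n}^r}{r!},
\]
where I used $\E\,p_{X_n}^{r+1}=\sum_m p_m^{r+2}\le p_n^*\sum_m p_m^{r+1}=p_n^*\,\E\,p_{X_n}^r$. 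Since $np_n^*\to\lambda$ is bounded whereas $n^{r+1}\E\,p_{X_n}^r\to\infty$, this is $O(n^{r+1}\E\,p_{X_n}^r)=o\big((n^{r+1}\E\,p_{X_n}^r)^2\big)$, which is precisely the target \eqref{eq:varconvar}; the same power-counting shows finiteness is not an issue, in direct analogy with \eqref{eq:vardnk} and Step~III of the Poissonian proof.

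The hard part will be the disjoint term. In Step~III the relevant functionals $I_k(X_n),I_l(Y_n)$ were monotone indicators, so the NOD inequality \eqref{eq:covH} signed the off-diagonal covariance as $\le0$ outright; here the atoms of $g_{n,k}$ come with the mixed-sign weights $w_{ij}$ (diagonal $w_{ii}=p_iq_i>0$, off-diagonal $w_{ij}=-p_ip_j<0$), so negative association alone does not sign the disjoint contribution. My plan is to use the centering $\sum_j w_{ij}=0$ to cancel the leading (independent-box) part of $\sum_{i,j,a,b}w_{ij}w_{ab}\cov(\cdot,\cdot)$, leaving a residual built only from the genuine negative dependence between distinct boxes, and then to bound that residual by the NOD estimates \eqref{eq:PA2}--\eqref{eq:covH}; the gain from differencing across the centered weights should make this residual of strictly smaller order than $W_n^{\mathrm{coinc}}$, hence also $o\big((n^{r+1}\E\,p_{X_n}^r)^2\big)$. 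Combining the two parts with $W_n\ge0$ then yields \eqref{eq:varconvar}.
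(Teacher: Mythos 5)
Your reduction of $\E(d_{n,k}^2\,|\,\cF_{n,k-1})$ to $\var(T_{n,k}(X_n)\,|\,\cF_{n,k-1})=\sum_{i,j}w_{ij}T_k(i)T_k(j)$ is correct (it is exactly \eqref{eq:vardnk1} resummed), and your treatment of the coinciding part is sound: the shared box contributes a weight $O(p_m^2)$, the crude bound $|\cov|\le\E T_{k\wedge l}(m)\le (np_m)^r/r!$ applies, and the resulting $n^{r+2}\E p_{X_n}^{r+1}\le (np_n^*)\,n^{r+1}\E p_{X_n}^r$ is $o((n^{r+1}\E p_{X_n}^r)^2)$. This mirrors what the paper does on the events where the auxiliary variables coincide. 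The problem is that the disjoint part --- which carries the $\binom{n}{2}$ cross terms in $k<l$ and is where all the real work sits --- is left as a plan, and the plan as stated cannot be completed with the tools you invoke. Negative association gives only a \emph{one-sided} inequality: for nondecreasing functions of counts on disjoint boxes it yields $\cov\le 0$, i.e.\ an upper bound. In your disjoint sum the weights $w_{ij}w_{ab}$ change sign (e.g.\ $w_{ii}w_{ab}=-p_iq_ip_ap_b<0$ when $a\ne b$), so for roughly half the terms you must bound the covariance from \emph{below}, that is, control the magnitude of a negative covariance of monotone functionals of multinomial counts. NOD/NA supplies no such lower bound, and the centering $\sum_jw_{ij}=0$ does not rescue this: the quantity it would cancel (the covariance under exact independence of the boxes) is already zero, so there is no leading term to subtract and no residual of smaller order is produced.

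The paper's proof avoids this sign problem by never bounding $\cov(g_{n,k},g_{n,l})$ directly. It splits $g_{n,k}=\alpha_{n,k}-\beta_{n,k}$, where $\alpha_{n,k}$ and $\beta_{n,k}$ are precisely the diagonal and off-diagonal parts of your weight matrix (weights $p_mq_m$ and $p_mp_{m'}$, both \emph{positive}), uses $\var(X-Y)\le 2\var X+2\var Y$ as in \eqref{eq:Wn}, and then bounds $\var\sum_k\alpha_{n,k}$ and $\var\sum_k\beta_{n,k}$ separately via \eqref{eq:Wnalpha}. Within each of these, every pairwise covariance is rewritten, through independent copies $X_n',Y_n'$ and auxiliary binomials, as a positive combination of covariances of nondecreasing indicator functionals, so the one-sided NOD bounds \eqref{eq:PA2}--\eqref{eq:covH} give $\cov(\alpha_{n,k},\alpha_{n,l}),\,\cov(\beta_{n,k},\beta_{n,l})\le cn^r\E p_{X_n}^{r+1}$, which is all that an upper bound on a variance requires. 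You should either adopt that single-signed splitting, or supply a genuine quantitative two-sided estimate for covariances of monotone functions of disjoint multinomial coordinates; as written, the disjoint part of your argument is a gap.
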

\begin{proof}
	We first rewrite \eqref{eq:vardnk1} as
	\begin{equation}
	\label{eq:ednk2}
	\E\,(d_{n,k}^2|\cF_{n,k-1})=\E(A_{n,k}(X_n)-B_{n,k}(X_n,Y_n)|\cF_{n,k-1})=\alpha_{n,k}-\beta_{n,k},
	\end{equation}
	where
	\begin{equation*}
	\begin{split}
	&A_{n,k}(x)=q_{x}T_{n,k}^2(x),\quad \alpha_{n,k}=\E(A_{n,k}(X_n)|\cF_{n,k-1}),\\ &B_{n,k}(x,y)=I_{\{x\neq y\}}T_{n,k}(x)T_{n,k}(y),\quad \beta_{n,k}=\E(B_{n,k}(X_n,Y_n)|\cF_{n,k-1}).
	\end{split}
	\end{equation*}
	Consequently, letting $W_n^\alpha=\var\sum_{k=1}^n\,\alpha_{n,k}$, $W_n^\beta=\var\sum_{k=1}^n\,\beta_{n,k}$ and noting that $\var\, (X+Y)\le 2(\var\, X+\var\, Y)$, we have
	\begin{equation}
	\label{eq:Wn}
	W_n\le 2W_n^\alpha+2W_n^\beta.
	\end{equation}
	Then
	\begin{equation}
	\label{eq:Wnalpha}
	W_n^\alpha=\sum_{k=1}^n\,\var\,\alpha_{n,k}+2\sum_{1\le k<l\le n}\,\cov\,(\alpha_{n,k},\alpha_{n,l}),
	\end{equation}
	and the analogous formula holds for $W_n^\beta$. In what follows we express the variances and covariances of $\alpha_{n,k},\beta_{n,k}$ in terms of $A_{n,k}(X_n), B_{n,k}(X_n,Y_n)$. For simplicity, let $Z_n=(X_n,Y_n), Z'_n=(X'_n,Y'_n)$, then
	\begin{equation}
	\label{eq:covars}
	\begin{split}
	\var\, \alpha_{n,k}&=\cov(A_{n,k}(X_n),A_{n,k}(X_n')), \cov\,(\alpha_{n,k},\alpha_{n,l})=\cov(A_{n,k}(X_n),A_{n,l}(X_n')),\\
	\var\, \beta_{n,k}&=\cov(B_{n,k}(Z_n),B_{n,k}(Z_n')), \cov\,(\beta_{n,k},\beta_{n,l})=\cov(B_{n,k}(Z_n),B_{n,l}(Z_n')),
	\end{split}
	\end{equation}
	where $X_n'$ and $Y_n'$ are  such that $X_n,X_n',Y_n,Y_n',X_{n,1},\ldots,X_{n,n}$ are iid for any $n\ge 1$. We only check the first formula; the others are obtained similarly.
	\begin{equation*}
	\begin{split}
	\E\,\alpha_{n,k}^2&=\E\,(\E(A_{n,k}(X_n)|\cF_{n,k-1})\E(A_{n,k}(X_n')|\cF_{n,k-1}))\\
	&=\E\,(\E(A_{n,k}(X_n)A_{n,k}(X_n')|\cF_{n,k-1}))\\
	&=\E\,(A_{n,k}(X_n)A_{n,k}(X_n')),
	\end{split}
	\end{equation*}
	\begin{equation*}
	(\E\,\alpha_{n,k})^2=(\E\,A_{n,k}(X_n))^2=(\E\,A_{n,k}(X_n))(\E\,A_{n,k}(X_n')),
	\end{equation*}
	and the formula for $\var\, \alpha_{n,k}$ follows. We now compute bounds for the covariances in \eqref{eq:covars}. Since $A_{n,k}(x)$ and $B_{n,k}(x,y)$ are bounded above by $T_{n,k}(x)\le 1$ 
	reasoning as in the paragraph preceding \eqref{eq:ub}, we have,
	\begin{equation}
	\label{eq:covAnkAnk}
	\cov(A_{n,k}(X_n),A_{n,k}(X_n'))\le \E\,A_{n,k}(X_n)A_{n,k}(X_n')\le \E\,T_{n,k}(X_n)\le n^{r}\E\,p_{X_n}^r
	\end{equation}
	and
	\begin{equation}
	\label{eq:covBnkBnk}
	\cov(B_{n,k}(Z_n),B_{n,k}(Z_n'))\le \E\,B_{n,k}(Z_n)B_{n,k}(Z_n')\le \E\,T_{n,k}(X_n)\le n^{r}\E\,p_{X_n}^r.
	\end{equation}
	Next, we handle $\cov(A_{n,k}(X_n),A_{n,l}(X_n'))$, which requires somewhat more effort than the previous covariances because the crude bounds do not yield the right order in $n$. Since $A_{n,k}(x)=(1-p_x)T^2_{n,k}(x)$,
	\begin{equation}
	\label{eq:rem3}
	\cov\,(A_{n,k}(X_n),A_{n,l}(X'_n))=\cov\,(T_{n,k}^2(X_n),T^2_{n,l}(X'_n))+O(n^r\E\,p_{X_n}^{r+1})
	\end{equation}
	because each of the remaining three covariances is bounded by an expression of the form $\E p_{X_n}T_{n,k}(X_n)\le cn^r\E p_{X_n}^{r+1}$. To bound the covariance between $T_{n,k}^2(X_n)$ and $T^2_{n,l}(X'_n)$
	we write
	\begin{equation}
	\label{eq:ET2T2}
	\E T^2_{n,k}(X_n) T^2_{n,l}(X'_n)=\E I_{\{X_n=X'_n\}}T^2_{n,k}(X_n) T^2_{n,l}(X_n)+
	\E I_{\{X_n\ne X'_n\}}T^2_{n,k}(X_n) T^2_{n,l}(X'_n).
	\end{equation}
	and note that the first expectation in \eqref{eq:ET2T2} is bounded by
	\begin{equation}
	\label{eq:eqbound}
	\E I_{\{X_n=X'_n\}}T_{n,k}(X_n)=\E p_{X_n}T_{n,k}(X_n)\le cn^r\E p_{X_n}^{r+1},
	\end{equation}
	where $c$ is a positive constant. For  the second expectation in \eqref{eq:ET2T2} we have the following expression,  written in terms of (conditionally independent) binomial random variables $B_1, B_2, B_1', B_2'$.
	\begin{equation}
	\label{eq:binomials}
	\begin{split}
	\E I_{\{X_n\ne X'_n\}}\P(B_{1}&\ge r-N_{n,k}(X_n),B_{2}\ge r-N_{n,k}(X_n),\\
	&B'_{1}\ge r-N_{n,l}(X'_n),B'_{2}\ge r-N_{n,l}(X'_n)|X_n,X'_n).
	\end{split}
	\end{equation}
	Conditionally on $(X_n,X_n')$, $B_1, B_2, B_1', B_2'$ are independent, with $B_1, B_2$ distributed $\Bin(n-k,p_{X_n})$ and $B'_1, B'_2$ distributed $\Bin(n-k,p_{X'_n})$. Further, $B_1, B_2, B_1', B_2'$ are independent of $\cF_{n,k}, \cF_{n,l}$, conditionally on $(X_n,X_n')$.
	
	Note that \eqref{eq:binomials} can be rewritten as
	\begin{equation}
	\label{eq:minbinomials}
	\E I_{\{X_n\ne X'_n\}}\P(N_{n,k}(X_n)\ge r-B_{12}, N_{n,l}(X'_n)\ge r-B'_{12}\}|X_n,X'_n),
	\end{equation}
	where $B_{12}=\min\{B_1,B_2\}$ and $B'_{12}=\min\{B'_1,B'_2\}$. Note also that, for $x\ne y$, $N_{n,k}(x)$ and $N_{n,l}(y)$ are NOD; see \eqref{eq:covH}. Thus, conditioning on the values of the binomials,  using the NOD property; then  integrating over the $B$'s and using independence of $X_n$ and $X'_n$, we have the following upper bound for \eqref{eq:minbinomials}
	\begin{equation*}
	\E I_{\{X_n\ne X'_n\}}\P(N_{n,k}(X_n)\ge r-B_{12}|X_n)\P( N_{n,l}(X'_n)\ge r-B'_{12}\}|X'_n),
	\end{equation*}
	which, after ignoring the indicator and noting that the conditional probabilities (on $X_n$ and $X_n'$) are independent random variables, can be finally bounded by
	\begin{equation}
	\label{eq:finalbound}
	\E \P(N_{n,k}(X_n)\ge r-B_{12}|X_n)\E\P( N_{n,l}(X'_n)\ge r-B'_{12}\}|X'_n)=\E\,T_{n,k}^2(X_n)\E\,T_{n,l}^2(X'_n).
	\end{equation}
	Therefore, from \eqref{eq:rem3}, \eqref{eq:ET2T2}, \eqref{eq:eqbound} and \eqref{eq:finalbound}, we have
	\begin{equation}
	\label{eq:covT2nk}
	\cov\,(T^2_{n,k}(X_n),T^2_{n,l}(X'_n))\le cn^r\E\,p_{X_n}^{r+1}.
	\end{equation}
	It remains to bound the covariances $\cov\,(B_{n,k}(Z_n),B_{n,l}(Z'_n))$. To that end we consider first, the expected value of the product.
	\begin{equation}
	\label{eq:bnkbnl}
	\begin{split}
	\E\, B_{n,k}(Z_n)B_{n,l}(Z'_n)&\le \E\, I_DT_{n,k}(X_n)T_{n,k}(Y_n)T_{n,l}(X'_n)T_{n,l}(Y'_n)\\
	&\quad+\E\, I_{D^c}T_{n,k}(X_n)T_{n,k}(Y_n)T_{n,l}(X'_n)T_{n,l}(Y'_n),
	\end{split}
	\end{equation}
	where $D$ is the event that 
	$X_n,Y_n,X'_n,Y'_n$ are all distinct. Then,
	\begin{equation}
	\label{eq:dc}
	\E I_{D^c}T_{n,k}(X_n)T_{n,k}(Y_n)T_{n,l}(X'_n)T_{n,l}(Y'_n)\le {4\choose2}\E p_{X_n}T_{n,k}(X_n)\le cn^r\E p_{X_n}^{r+1}.
	\end{equation}
	Note that, as in \eqref{eq:binomials}, the first term on the rhs of \eqref{eq:bnkbnl} can be written as follows
	\begin{equation}
	\label{eq:binomials2}
	\begin{split}
	\E I_D\P(B_{1}\ge r-N_{n,k}(X_n)&,B_{2}\ge r-N_{n,k}(Y_n),\\
	&B'_{1}\ge r-N_{n,l}(X'_n),B'_{2}\ge r-N_{n,l}(Y'_n)|Z_n,Z'_n).
	\end{split}
	\end{equation}
	Conditionally on $(Z_n,Z'_n)$, $B_1, B_2, B_1', B_2'$ are independent, where $B_1$ is $\Bin(n-k,p_{X_n})$, $B_2$ is $\Bin(n-k,p_{Y_n})$,  $B'_1$ is $\Bin(n-k,p_{X'_n})$ and $B'_2$ is $\Bin(n-k,p_{Y'_n})$ . Also, $B_1, B_2, B_1', B_2'$ are independent of $\cF_{n,k}, \cF_{n,l}$, conditionally on $(Z_n, Z_n')$. Now, using the NOD property \eqref{eq:PA2} and the independence of $X_n,Y_n,X'_n$, $Y'_n$, the expression in \eqref{eq:binomials2} is bounded above by
	\begin{equation}
	\label{eq:binomials3}
	\begin{split}
	\E \P(B_{1}\ge r-N_{n,k}(X_n)&, B_{2}\ge r-N_{n,k}(Y_n)|Z_n)\\
	&\quad\times\P(B'_{1}\ge r-N_{n,l}(X'_n), B'_{2}\ge r-N_{n,l}(Y'_n)|Z'_n)\\
	&=\E \P(B_{1}\ge r-N_{n,k}(X_n), B_{2}\ge r-N_{n,k}(Y_n)|Z_n)\\
	&\quad\times\E\P(B'_{1}\ge r-N_{n,l}(X'_n), B'_{2}\ge r-N_{n,l}(Y'_n)|Z'_n)\\
	&=\E \,T_{n,k}(X_n)T_{n,k}(Y_n)\E\, T_{n,l}(X'_n)T_{n,l}(Y'_n)\\
	&=\E \, B_{n,k}(Z_n)\E\, B_{n,l}(Z'_n)+O(n^r\E\,p_{X_n}^{r+1}).
	\end{split}
	\end{equation}
	Therefore, from \eqref{eq:bnkbnl}, \eqref{eq:dc} and \eqref{eq:binomials3},
	\begin{equation}
	\label{eq:covBnkBnl}
	\cov\,(B_{n,k}(Z_n),B_{n,l}(Z'_n))\le cn^r\E p_{X_n}^{r+1}.
	\end{equation}
	We complete
	the proof of \eqref{eq:varconvar} by collecting the partial results above to obtain bounds for $W_n^\alpha$ and $W_n^\beta$, using formula \eqref{eq:Wnalpha}. From \eqref{eq:covAnkAnk} and \eqref{eq:covBnkBnk} we have
	\begin{equation*}
	\label{eq:sumvaralphabeta}
	\sum_{k=1}^n\,\var\,\alpha_{n,k}\le n^{r+1}\E\,p_{X_n}^r\quad\text{and}\quad \sum_{k=1}^n\,\var\,\beta_{n,k}\le n^{r+1}\E\,p_{X_n}^r
	\end{equation*}
	From \eqref{eq:rem3} and \eqref{eq:covT2nk}
	\begin{equation*}
	\label{eq:sumcovalpha}
	\sum_{1\le k<l\le n}\cov\,(\alpha_{n,k},\alpha_{n,l})\le c{n\choose2}n^r\E p_{X_n}^{r+1}\le cnp_n^*\left(n^{r+1}\E p_{X_n}^r\right)=o((n^{r+1}\E p_{X_n}^r)^2).
	\end{equation*}
	Last, from \eqref{eq:covBnkBnl}
	\begin{equation*}
	\label{eq:sumcovbeta}
	\sum_{1\le k<l\le n}\cov\,(\beta_{n,k},\beta_{n,l})\le c{n\choose 2}n^r\E\,p_{X_n}^{r+1}\le cnp_n^*\left(n^{r+1}\E p_{X_n}^r\right) =o((n^{r+1}\E p_{X_n}^r)^2).
	\end{equation*}
	The conclusion follows from \eqref{eq:Wn}, \eqref{eq:Wnalpha} and the bounds for the sums of variances and covariances above.
\end{proof}
\subsection{Final touch - the martingale CLT}
\label{sec:finaltouch}
We show the asymptotic normality by applying the martingale central limit theorem (see e.~g. \cite[Theorem~2.5]{HI} to the martingale differences $(d_{n,k})$. Since $d_{n,k}$'s are uniformly bounded the conditional Lindeberg condition (\cite[condition (2.5)]{HI}) follows from the fact that the variance of the sum  grows to infinity as $n\to\infty$. The remaining condition to be checked (\cite[condition (2.7)]{HI}) is that
\begin{equation*}
\label{lind}
\frac{\sum_{k=1}^n\E (d_{n,k}^2|\cF_{n,k-1})}{\sum_{k=1}^n\E d_{n,k}^2}\stackrel \P\longrightarrow1,
\end{equation*}
as $n\to\infty$ or, equivalently,   that
\begin{equation*}
\frac{\sum_{k=1}^n(\E (d^2_{n,k}|\cF_{n,k-1})-\E d_{n,k}^2)}{\sum_{k=1}^n\E d^2_{n,k}}\stackrel \P\longrightarrow 0.
\end{equation*}
But this follows immediately from Lemma \ref{lem:bd4var}, Lemma \ref{lem:vcv} and Chebyshev's inequality.
\section{Asymptotics for  number of full containers with and without overflow}
Let $L_{n,r}$ denote  the number of full containers and $M_{n,r}$ denote number of full containers without overflow. The main idea is to represent $L_{n,r}$ and $M_{n,r}$ in terms of  the size of the overflow $V_{n,r}$.

Recall that $N_{n,n+1}(m)$ is the total number of balls in the sample for which the $m$th box  was selected. Thus
\begin{equation*}
L_{n,r}=\sum_{j\in M}\,I_{\{N_{n,n+1}(j)\ge r\}}\quad\text{and}\quad M_{n,r}=\sum_{j\in M}\,I_{\{N_{n,n+1}(j)=r\}}=L_{n,r}-L_{n,r+1}.
\end{equation*}
We note that
\begin{equation*}
\begin{split}
L_{n,r}&=\sum_{j\in M}\,\sum_{k=2}^n\,I_{\{X_{n,k}=j\}}\,I_{\{N_{n,k-1}(j)=r-1\}}\\
&=\sum_{j\in M}\,\sum_{k=2}^n\,I_{\{X_{n,k}=j\}}\,I_{\{N_{n,k-1}(j)\ge r-1\}}-\sum_{j\in M}\,\sum_{k=2}^n\,I_{\{X_{n,k}=j\}}\,I_{\{N_{n,k-1}(j)\ge r\}}.
\end{split}
\end{equation*}
That is,
\begin{equation}
\label{eq:Lnr}
L_{n,r}=V_{n,r-1}-V_{n,r}
\end{equation}
and
\begin{equation}
\label{eq:Mnr}
M_{n,r}=V_{n,r-1}-2V_{n,r}+V_{n,r+1}.
\end{equation}
Note that in the case $r=1$ we have $V_{n,0}=n$ and thus $L_{n,1}$, which is a number of non-empty boxes, is
\begin{equation}
\label{eq:Ln1}
L_{n,1}=n-V_{n,1}
\end{equation}
and $M_{n,1}$, which is number of singleton boxes, is
\begin{equation}
\label{eq:Mn1}
M_{n,1}=n-2V_{n,1}+V_{n,2}.
\end{equation}
These representations of $M_{n,r}$ and $L_{n,r}$  in terms of $V_{n,r-1}$, $V_{n,r}$ and $V_{n,r+1}$ allow to read Poissonian asymptotics of these two sequences from Theorem \ref{thm:rrr}. For $M_{n,r}$  the forthcoming statement  was proved in \cite[Theorem~III.3.1]{KSC}.
\begin{theorem}
	\label{thm:ml}
	Assume that $np_n^*\to 0$.
	\begin{enumerate}
		\item 	If $r>1$ and  $n^r\,\E\,p_{X_n}^{r-1}\to r!\mu$ then
		\begin{equation*}
		L_{n,r}\stackrel{d}{\to}\,\mathrm{Pois}(\mu)\quad \mbox{and}\quad M_{n,r}\stackrel{d}{\to}\,\mathrm{Pois}(\mu).
		\end{equation*}
		\item If $r=1$ and  $n^2\,\E\,\pxn\to \mu$ then
		\begin{equation*}
		n-L_{n,1}\stackrel{d}{\to}\,\mathrm{Pois}(\mu)\quad \mbox{and}\quad \frac{n-M_{n,1}}{2}\stackrel{d}{\to}\mathrm{Pois}(\mu).
		\end{equation*}
	\end{enumerate}
\end{theorem}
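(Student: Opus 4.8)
The plan is to read off both statements from the Poissonian asymptotics of the overflow already established in Theorem \ref{thm:rrr}, using the linear identities \eqref{eq:Lnr}, \eqref{eq:Mnr} and their $r=1$ specializations \eqref{eq:Ln1}, \eqref{eq:Mn1}. The guiding principle is that in each of these linear combinations exactly one overflow term carries a non-vanishing intensity and converges to a genuine Poisson law, while every remaining term has intensity tending to $0$ and is therefore negligible in probability. Slutsky's theorem then assembles the limit.

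For part (1) I would first observe that the hypothesis $n^r\,\E\,p_{X_n}^{r-1}\to r!\mu$ is exactly condition \eqref{eq:inten1} with the capacity parameter shifted from $r$ to $r-1$; combined with $np_n^*\to 0$, Theorem \ref{thm:rrr} at capacity $r-1$ then gives $V_{n,r-1}\stackrel{d}{\to}\mathrm{Pois}(\mu)$. For part (2) the same theorem at capacity $1$ controls $V_{n,1}$, which is the surviving term in $n-L_{n,1}=V_{n,1}$ and in $\tfrac{n-M_{n,1}}{2}=V_{n,1}-\tfrac12 V_{n,2}$.

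The remaining overflows of higher capacity I would dispose of by a first-moment argument rather than by Theorem \ref{thm:rrr}, which is unavailable at intensity $0$. Reasoning as in Step II of the proof of Theorem \ref{thm:rrr} gives $\E\,V_{n,s}\le \tfrac{n^{s+1}}{(s+1)!}\,\E\,p_{X_n}^s$, and Lemma \ref{lem:to_zero}, applied at the shifted capacity, shows $n^{s+1}\,\E\,p_{X_n}^s\to 0$ for the relevant $s$: in part (1) this covers $s=r$ and $s=r+1$, so $V_{n,r}\stackrel{\P}{\to}0$ and $V_{n,r+1}\stackrel{\P}{\to}0$ by Markov's inequality; in part (2) it covers $s=2$, so $V_{n,2}\stackrel{\P}{\to}0$.

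Assembling, for part (1) the representations $L_{n,r}=V_{n,r-1}-V_{n,r}$ and $M_{n,r}=V_{n,r-1}-2V_{n,r}+V_{n,r+1}$ each display one distributionally convergent term plus terms vanishing in probability, so Slutsky yields $L_{n,r}\stackrel{d}{\to}\mathrm{Pois}(\mu)$ and $M_{n,r}\stackrel{d}{\to}\mathrm{Pois}(\mu)$; the identical step applied to $n-L_{n,1}$ and $\tfrac{n-M_{n,1}}{2}$ settles part (2). The proof is thus mostly bookkeeping once Theorem \ref{thm:rrr} and Lemma \ref{lem:to_zero} are granted, and I expect the only delicate point to be the separation of roles: recognizing that each hypothesis is precisely \eqref{eq:inten1} at the correct shifted capacity so that exactly one overflow survives, and tracking the factorial normalization in \eqref{eq:inten1} carefully so that the surviving term returns the asserted Poisson parameter.
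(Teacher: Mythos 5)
Your proposal reproduces the paper's proof essentially verbatim: Theorem~\ref{thm:rrr} applied at the shifted capacity yields the Poisson limit of the single surviving overflow term, the higher-capacity overflows are disposed of by the first-moment bound $\E\,V_{n,s}\le n^{s+1}\E\,p_{X_n}^s\to0$ from Step~II combined with Lemma~\ref{lem:to_zero}, and Slutsky assembles the limit. The one point you flag as delicate---the factorial normalization---is indeed worth a second look: in part~(2) the hypothesis $n^2\E\,p_{X_n}\to\mu$ is \eqref{eq:inten1} with $(r+1)!\,\mu'=2\mu'=\mu$, so Theorem~\ref{thm:rrr} literally returns $\mathrm{Pois}(\mu/2)$ for $V_{n,1}$; this factor-of-two mismatch is present in the paper's own statement and proof as well, so it is not a defect introduced by your argument.
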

\begin{proof}
	The case $r>1$: Due to representations \eqref{eq:Lnr} and \eqref{eq:Mnr} to prove both results it suffices to show that $\E\,V_{n,s}\to 0$ for any fixed $s\ge r$. But following the argument from the beginning of Step II of the proof of Theorem \ref{thm:rrr} we see that
	\begin{equation*}
	\E\,V_{n,s}=\sum_{k=1}^n\,\sum_{i=s}^{k-1}\,{k-1 \choose i}\,\E\,p_{X_n}^iq_{X_n}^{k-1-i}\le n^{s+1}\E\,p_{X_n}^s
	\to 0,
	\end{equation*}
	where the convergence to zero in the last step follows from  Lemma~\ref{lem:to_zero}.
	
	The case $r=1$: The first part follows from Theorem \ref{thm:rrr} since  \eqref{eq:Ln1} implies $n-L_{n,1}=V_{n,1}$. The second follows also from Theorem \ref{thm:rrr} since \eqref{eq:Mn1} gives
	\begin{equation*}
	\frac{n-M_{n,1}}{2}=V_{n,1}-\frac{V_{n,2}}{2}
	\end{equation*}
	and, similarly as in the case $r>1$, we have $\E\,V_{n,2}\to 0$.
\end{proof}
Note that under assumptions of Th. \ref{thm:ml}
\begin{itemize}
	\item in case 1: \hspace{5mm}  $L_{n,r}-M_{n,r}\stackrel{\P}\to 0$,
	\item in case 2: \hspace{5mm} $\tfrac{L_{n,1}-M_{n,1}}{n}\stackrel{\P}{\to}1.$ \end{itemize}

Representations \eqref{eq:Lnr} and \eqref{eq:Mnr} are also useful for getting Gaussian asymptotics of $L_{n,r}$ and $M_{n,r}$ from Theorem \ref{norasy} in the case $\lambda=0$.
\begin{theorem}
	\label{thm:norasy_2}
	Assume that $np_n^*\to 0$ and $r\ge 1$.
	\begin{enumerate}
		\item If $n^{r+1}\E\,p_{X_n}^r\to\infty$ then
		\begin{equation*}
		\frac{L_{n,r}-\E\,L_{n,r}}{\sqrt{\var\,L_{n,r}}}\stackrel{d}{\to} \mathrm{N}(0,1).
		\end{equation*}
		\item If $n^{r+2}\E\,p_{X_n}^{r+1}\to\infty$ then
		\begin{equation*}
		\frac{M_{n,r}-\E\,M_{n,r}}{\sqrt{\var\,M_{n,r}}}\stackrel{d}{\to} \mathrm{N}(0,1).
		\end{equation*}
		
	\end{enumerate}
\end{theorem}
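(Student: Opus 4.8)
The plan is to reduce both statements to the single--overflow CLT of Theorem \ref{norasy} by exploiting the representations \eqref{eq:Lnr} and \eqref{eq:Mnr}, which write $L_{n,r}$ and $M_{n,r}$ as fixed signed combinations of $V_{n,r-1},V_{n,r}$ (and, for $M_{n,r}$, $V_{n,r+1}$). The guiding structural fact, valid once $np_n^*\to0$, is that the overflow variances are strictly ordered. Indeed, the universal upper bound \eqref{eq:ub} gives $\var V_{n,s}\le n^{s+1}\E p_{X_n}^s/s!$, while Lemma \ref{lem:bd4var} (with $\lambda=0$) supplies the matching lower bound $\var V_{n,s'}\ge\tfrac{1}{(s'+1)!}n^{s'+1}\E p_{X_n}^{s'}(1+o(1))$ for any index $s'$ with $n^{s'+1}\E p_{X_n}^{s'}\to\infty$. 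Combined with $\E p_{X_n}^{s}\le (p_n^*)^{s-s'}\E p_{X_n}^{s'}$, these give, for such an $s'$ and any $s>s'$, the bound $\var V_{n,s}/\var V_{n,s'}\le c\,(np_n^*)^{s-s'}\to0$. Hence in each signed combination the summand of smallest index dominates in $L^2$ and the remaining summands are negligible. For $r\ge2$ this dominant summand is $V_{n,r-1}$; for $r=1$ the leading term $V_{n,0}=n$ is a constant, so the dominant role passes to $V_{n,1}$.

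For part (1) with $r\ge2$ I would center and divide, writing $\tfrac{L_{n,r}-\E L_{n,r}}{\sqrt{\var V_{n,r-1}}}=\tfrac{V_{n,r-1}-\E V_{n,r-1}}{\sqrt{\var V_{n,r-1}}}-\tfrac{V_{n,r}-\E V_{n,r}}{\sqrt{\var V_{n,r-1}}}$. The first term converges to $\mathrm{N}(0,1)$ by Theorem \ref{norasy} applied with index $r-1$: since $n^{r+1}\E p_{X_n}^r\le(np_n^*)\,n^{r}\E p_{X_n}^{r-1}$, the hypothesis $n^{r+1}\E p_{X_n}^r\to\infty$ forces $n^{r}\E p_{X_n}^{r-1}\to\infty$, which is exactly the requirement for the CLT for $V_{n,r-1}$. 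The second term tends to $0$ in $L^2$ because $\var V_{n,r}/\var V_{n,r-1}\to0$. Finally $\var L_{n,r}\sim\var V_{n,r-1}$, as the cross term obeys $|\cov(V_{n,r-1},V_{n,r})|\le(\var V_{n,r-1}\var V_{n,r})^{1/2}=o(\var V_{n,r-1})$ by Cauchy--Schwarz, so Slutsky's theorem finishes the case. The case $r=1$ is immediate from $L_{n,1}=n-V_{n,1}$ together with Theorem \ref{norasy} for $r=1$.

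Part (2) is identical in spirit, starting from $M_{n,r}-\E M_{n,r}=(V_{n,r-1}-\E V_{n,r-1})-2(V_{n,r}-\E V_{n,r})+(V_{n,r+1}-\E V_{n,r+1})$. For $r\ge2$ the dominant term is again $V_{n,r-1}$, and $n^{r+2}\E p_{X_n}^{r+1}\le(np_n^*)^2 n^{r}\E p_{X_n}^{r-1}$ shows that the hypothesis $n^{r+2}\E p_{X_n}^{r+1}\to\infty$ again yields $n^{r}\E p_{X_n}^{r-1}\to\infty$. The $V_{n,r}$ and $V_{n,r+1}$ terms vanish in $L^2$, their covariances with $V_{n,r-1}$ are $o(\var V_{n,r-1})$ by Cauchy--Schwarz, hence $\var M_{n,r}\sim\var V_{n,r-1}$ and Slutsky applies. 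For $r=1$ one uses $M_{n,1}=n-2V_{n,1}+V_{n,2}$, where $V_{n,1}$ is dominant, $\var M_{n,1}\sim4\var V_{n,1}$, and $n^{3}\E p_{X_n}^{2}\to\infty$ gives $n^{2}\E p_{X_n}\to\infty$.

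The bookkeeping is routine once these variance orders are secured, so the real engine is Lemma \ref{lem:bd4var}; the essential point is its lower bound, not merely the easy upper bound \eqref{eq:ub}, because only a matching lower bound on the dominant variance certifies that the higher--index overflow terms are of strictly smaller order and may be dropped. I expect the only delicate steps to be the propagation of each stated hypothesis to the condition $n^{s+1}\E p_{X_n}^{s}\to\infty$ for the dominant index, and the separate handling of $r=1$, where the leading overflow variable $V_{n,0}=n$ degenerates to a constant and the dominant role shifts to $V_{n,1}$.
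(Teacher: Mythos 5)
Your proposal is correct and follows essentially the same route as the paper: the representations \eqref{eq:Lnr} and \eqref{eq:Mnr}, the upper bound \eqref{eq:ub} combined with the lower bound of Lemma \ref{lem:bd4var} to get $\var\,V_{n,s}/\var\,V_{n,r-1}\le c\,(np_n^*)^{s-r+1}\to0$ for $s>r-1$, Cauchy--Schwarz for the cross terms, and Theorem \ref{norasy} applied to the dominant summand. Your explicit separation of the case $r=1$ (where $V_{n,0}=n$ is degenerate and the dominant role passes to $V_{n,1}$) is in fact slightly more careful than the paper's write-up, which for $r=1$ would formally involve the vanishing quantity $\var\,V_{n,0}$ in a denominator.
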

\begin{proof}
	By representation \eqref{eq:Lnr} we can write
	\begin{equation*}
	\tfrac{\var\,L_{n,r}}{\var\,V_{n,r-1}}=1+\tfrac{\var\,V_{n,r}}{\var\,V_{n,r-1}}-2\tfrac{\cov(V_{n,r-1},\,V_{n,r})}{\var\,V_{n,r-1}}.
	\end{equation*}
	Since $n^{r+1}\E\,p_{X_n}^r\le np_n^*\,n^r\E\,p_{X_n}^{r-1}$ it follows that $n^r\E\,p_{X_n}^{r-1}\to \infty$. Therefore by Lemma ~\ref{lem:bd4var} we have
	\begin{equation*}
	\tfrac{\var\,V_{n,r}}{\var\,V_{n,r-1}}\le c \tfrac{n^{r+1}\E\,p_{X_n}^r}{n^r\E\,p_{X_n}^{r-1}}\le c np_n^*\to 0
	\end{equation*}
	and thus also
	\begin{equation*}
	\left|\tfrac{\cov(V_{n,r-1},\,V_{n,r})}{\var\,V_{n,r-1}}\right|\le\sqrt{\tfrac{\var\,V_{n,r}}{\var\,V_{n,r-1}}}\to 0.
	\end{equation*}
	Consequently,  $\tfrac{V_{n,r}-\E\,V_{n,r}}{\sqrt{\var\,L_{n,r}}}\stackrel{L^2}{\to}0$.  Thus the first result is a consequence of Theorem~\ref{norasy} since, in view of the representation \eqref{eq:Lnr},
	\begin{equation*}
	\tfrac{L_{n,r}-\E\,L_{n,r}}{\sqrt{\var\,L_{n,r}}}=\tfrac{V_{n,r-1}-\E\,V_{n,r-1}}{\sqrt{\var\,V_{n,r-1}}}\,\sqrt{\tfrac{\var\,V_{n,r-1}}{\var\,L_{n,r}}}-\tfrac{V_{n,r}-\E\,V_{n,r}}{\sqrt{\var\,L_{n,r}}}.
	\end{equation*}
	For the second case, by representation \eqref{eq:Mnr} we can write
	\begin{equation*}
	\begin{split}
	\tfrac{\var\,M_{n,r}}{\var\,V_{n,r-1}}&=1+4\tfrac{\var\,V_{n,r}}{\var\,V_{n,r-1}}+\tfrac{\var\,V_{n,r+1}}{\var\,V_{n,r-1}}\\
	&-4\tfrac{\cov(V_{n,r-1},\,V_{n,r})}{\var\,V_{n,r-1}}
	-4\tfrac{\cov(V_{n,r},\,V_{n,r+1})}{\var\,V_{n,r-1}}+2\tfrac{\cov(V_{n,r-1},\,V_{n,r+1})}{\var\,V_{n,r-1}}.
	\end{split}
	\end{equation*}
	Similarly as in the previous case we conclude that $n^s\E\,p_{X_n}^{s-1}\to \infty$ for $s=r,r+1$. Therefore, by  the same argument as above it follows that each of the summands at the right hand side above except the first one converges to 0 as $n\to \infty$.
	Consequently,  $\tfrac{V_{n,s}-\E\,V_{n,s}}{\sqrt{\var\,M_{n,r}}}\stackrel{L^2}{\to}0$, $s=r,r+1$.  Thus the second result is a consequence of Theorem~\ref{norasy} since, in view of  \eqref{eq:Mnr},
	\begin{equation*}
	\tfrac{M_{n,r}-\E\,M_{n,r}}{\sqrt{\var\,M_{n,r}}}=\tfrac{V_{n,r-1}-\E\,V_{n,r-1}}{\sqrt{\var\,V_{n,r-1}}}\,\sqrt{\tfrac{\var\,V_{n,r-1}}{\var\,M_{n,r}}}-2\tfrac{V_{n,r}-\E\,V_{n,r}}{\sqrt{\var\,M_{n,r}}}+\tfrac{V_{n,r+1}-\E\,V_{n,r+1}}{\sqrt{\var\,M_{n,r}}}.
	\end{equation*}
\end{proof}

\end{document}